\newtheorem{theorem}{Theorem}
\newtheorem{lemma}[theorem]{Lemma}
\newdefinition{rem}{Remark}
\newtheorem{proposition}[theorem]{Proposition}
\journal{Linear Algebra and its Applications
}
\DeclareMathOperator{\diag}{diag}  %
\DeclareMathOperator{\rank}{rank} %
\DeclareMathOperator{\spn}{span}   %
\DeclareMathOperator{\range}{Range}   %
\newcommand{\cA}{\mathcal{A}}
\newcommand{\cB}{\mathcal{B}}
\newcommand{\cC}{\mathcal{C}}
\newcommand{\cD}{\mathcal{D}}
\newcommand{\cF}{\mathcal{F}}
\newcommand{\cH}{\mathcal{H}}
\newcommand{\cK}{\mathcal{K}}
\newcommand{\cS}{\mathcal{S}}
\newcommand{\RR}{\mathbb{R}}
\renewcommand{\a}{\alpha}
\renewcommand{\b}{\beta}
\newcommand{\g}{\gamma}
\newcommand{\s}{\sigma}
\newcommand{\<}{\langle}
\renewcommand{\>}{\rangle}
\newcommand{\x}{\times}
\newcommand{\wh}[1]{\widehat{ #1 }}
\newcommand{\tp}{^{\sf T}}
\newcommand{\tml}[3][]{\bm{\left(} #2 \bm{\right)}_{ #1} \bm{\cdot} #3}
\newcommand{\tmr}[3][]{#2 \bm{\cdot} \bm{\left(} #3 \bm{\right)}_{ #1}}
\newcommand{\ctp}[2][]{\left\< #2 \right\>_{#1}}
\begin{document}

\begin{frontmatter}



\title{Krylov-Type  Methods for Tensor Computations\tnoteref{t1}}
\tnotetext[t1]{This work was
    supported by the Swedish Research Council.}


\author[lars,berkant]{Berkant Savas}
\ead{berkant@cs.utexas.edu}

\author[lars]{Lars Eld\'{e}n}
\ead{laeld@math.liu.se}

\address[lars]{Department of Mathematics,
Link\"{o}ping University,
SE-581 83 Link\"{o}ping,
Sweden}
\address[berkant]{Institute for Comp. Engin. and Sciences,
University of Texas at Austin,
Austin, TX 78712,~USA}

 
 
 

\begin{abstract}
  Several Krylov-type procedures are introduced that generalize matrix
  Krylov methods for tensor computations. They are denoted \textit{minimal
    Krylov recursion}, \textit{maximal Krylov recursion},
  \textit{contracted tensor product Krylov recursion}. It is proved that
  the for a given tensor with low rank, the minimal Krylov recursion
  extracts the correct subspaces associated to the tensor within certain
  number of iterations.  An optimized minimal Krylov procedure is described
  that gives a better tensor approximation for a given multilinear rank
  than the standard minimal recursion. The maximal Krylov recursion
  naturally admits a Krylov factorization of the tensor.  The tensor Krylov
  methods are intended for the computation of low-rank approximations of
  large and sparse tensors, but they are also useful for certain dense and
  structured tensors for computing their higher order singular value
  decompositions or obtaining starting points for the best low-rank
  computations of tensors.  A set of numerical experiments, using real life
  and synthetic data sets, illustrate some of the properties of the tensor
  Krylov methods.
\end{abstract}

\begin{keyword}
Tensor \sep Krylov-type method \sep tensor approximation \sep Tucker model \sep multilinear algebra \sep multilinear rank \sep sparse tensor \sep information science

\emph{AMS:} 15A69 \sep 65F99 
\end{keyword}

\end{frontmatter}



\section{Introduction}
\label{sec:intro}
Large-scale problems in engineering and science often require solution
of sparse linear algebra problems, such as systems of equations, and
eigenvalue problems.  Recently
\cite{bhk:06,bader07b,chew07,kolda06b,kolda05,kolda09}
it has been shown that several applications in information sciences,
such as web link analysis, social networks, and cross-language
information retrieval, generate large data sets that are sparse
tensors. In this paper we introduce new methods for efficient
computations with large and sparse tensors.

Since the 1950's Krylov subspace methods have been developed so that
they are now one of the  main classes of algorithms for solving
iteratively large and sparse matrix problems.  Given a square matrix
$A \in \RR^{n \x n}$ and a starting vector $u \in \RR^n$ the
corresponding $k$-dimensional Krylov subspace  is 
\begin{equation*}\label{eq:krylovSubsp}
\cK_k(A,u) = \spn \{u, Au, A^2 u, \dots, A^{k-1}u \}.
\end{equation*}
In floating point arithmetic the vectors in the Krylov subspace are
useless unless they are orthonormalized.  Applying Gram-Schmidt
orthogonalization one obtains the Arnoldi process, which
 generates an orthonormal basis for the Krylov subspace
$\cK_k(A,u)$. In addition, the Arnoldi process generates the
factorization
\begin{equation}
\label{eq:arn-fact}
A U_k = U_{k+1} H_k,
\end{equation}
where $U_k = [u_1\, \dots \, u_k]$, and $U_{k+1} = [U_k\,\, u_{k+1}]$
with orthonormal columns, and $H_k$ is a Hessenberg matrix with the
orthonormalization coefficients. Based on the factorization
\eqref{eq:arn-fact} one can compute an approximation of the solution
of a linear system or an eigenvalue problem by projecting onto the
space spanned by the columns of $U_k$, where $k$ is much smaller than
the dimension of $A$; on that subspace the operator $A$ is represented
by the small matrix $H_k$. This approach is particularly useful for
large, and sparse problems, since it uses the matrix $A$ in
matrix-vector multiplications only.

Projection to a low-dimensional subspace is a common technique in many
areas of information science. This is also the case in applications
involving tensors. One of the main theoretical and algorithmic problems
researchers have addressed is the computation of low  rank
approximation of a given tensor
\cite{elden09,savaslim10,ishteva09b,oseledets08,khor09}.  The two main
approaches are the Canonical Decomposition \cite{cach70,harsh70} and the
Tucker decomposition \cite{tucke64}; we are concerned with the latter. 

The following question arises naturally:
\begin{quote}
 \emph{Can Krylov methods be generalized to tensors, to be used for the 
 projection to  low-dimensional subspaces? }
\end{quote}
We answer this question in the affirmative, and describe several
alternative ways one can generalize Krylov subspace methods for
tensors. Our method is inspired by Golub-Kahan bidiagonalization
\cite{golkah:65}, and the Arnoldi method, see e.g. \cite[p.
303]{stew:01}. In the bidiagonalization method two sequences of
orthogonal vectors are generated; for a tensor of order three, our
procedures generates three sequences of orthogonal vectors.  Unlike
the bidiagonalization procedure, it is necessary to perform Arnoldi
style orthogonalization of the generated vectors explicitly. For
matrices, once an initial vector has been selected, the whole sequence
is determined uniquely. For tensors, there are many ways in which the
vectors can be generated. We will describe three principally different
tensor Krylov methods. These are the \textit{minimal Krylov
  recursion}, \textit{maximal Krylov recursion} and \textit{contracted
  tensor product Krylov recursion}. In addition we will discuss the
implementation of an optimized version \cite{gos10} of the minimal
Krylov recursion, and we will show how to deal with tensors that are
small in one mode. For a given tensor $\cA$ with $\rank(\cA)= (p,q,r)$
the minimal Krylov recursion can extract the correct subspaces
associated to $\cA$ in $\max\{p,q,r\}$ iterations. The maximal Krylov
recursion admits a tensor Krylov factorization that generalizes the
matrix Krylov factorization.  The contracted tensor product Krylov
recursion is a generalization of  the matrix Lanczos method applied to
symmetric matrices $A\tp A$ and $A A\tp$.

Although our main motivation is to develop efficient methods for large
and sparse tensors, the methods are useful for other tasks as well. In
particular, they can be used for obtaining starting points for the
\textit{best} low rank tensor approximation problem, and for tensors
with relatively low multilinear rank they provide a way of speeding up
the computation of the Higher Order SVD (HOSVD) \cite{latha00}. The
latter part is done by first computing a full factorization using the
minimal Krylov procedure and then computing the HOSVD of the much
smaller core tensor that results from the approximation.

The paper is organized as follows. The necessary tensor concepts are
introduced in Section \ref{sec:concepts}. The Arnoldi and Golub-Kahan
procedures are sketched in Section \ref{sec:two-matrix}. In Section
\ref{sec:tensor-krylov} we describe different variants of Krylov methods
for tensors.  Section \ref{sec:numerical} contains  numerical examples
illustrating various aspects of the proposed methods. 

As this paper is a first introduction to Krylov methods for tensors, we do
not imply that it gives a comprehensive treatment of the subject. Rather
our aim is to outline our discoveries so far, and point to the similarities
and differences between the tensor and matrix cases.

\section{Tensor Concepts}
\label{sec:concepts}

\subsection{Notation and Preliminaries}
Tensors will be denoted by calligraphic letters, e.g
$\mathcal{A},\mathcal{B}$, matrices by capital roman letters and
vectors by lower case roman letters.  In order not to burden the
presentation with too much detail, we sometimes will not explicitly
mention the dimensions of matrices and tensors, and assume that they
are such that the operations are well-defined. The whole presentation
will be in terms of tensors of order three, or equivalently 3-tensors.
The generalization to order-$N$ tensors is obvious.

  We
will use the term tensor in a restricted sense, i.e. as a
3-dimensional array of real numbers, $\cA \in \RR^{l \times m \times n}$,
where the vector space is  equipped with some algebraic
structures to be defined. The
different ``dimensions'' of the tensor are referred to as
\emph{modes}.   We will use
both standard subscripts  and ``MATLAB-like'' notation: a particular
tensor element will be denoted in two equivalent ways,
\[
\cA(i,j,k) = a_{ijk}.
\]
A subtensor obtained by fixing one of the indices is called a
\emph{slice}, e.g.,
\[
\cA(i,:,:).
\]
Such a slice can be  considered as an order-3 tensor, but also as a
matrix. 

A \emph{fibre} is a subtensor, where all indices but one are fixed,
\[
\cA(i,:,k).
\]
For a given third order tensor, there are three associated subspaces, one for each mode. These subspaces are given by 
\begin{align*}
& \range \{ \cA(:,j,k) \; | \; j = 1:m, \; k = 1:n  \},\\
& \range \{ \cA(i,:,k) \; | \; i = 1:l, \; k = 1:n  \},\\
& \range \{ \cA(i,j,:) \; | \; i = 1:l, \; j = 1:m  \}.
\end{align*}
The \emph{multilinear rank} \cite{hit:27,desilva08} of the tensor is said
to be equal to $(p,q,r)$ 
if the dimension of these subspaces are $p,$ $q,$ and $r$, respectively. 

It is customary in numerical linear algebra to write out column
vectors with the elements arranged vertically, and row vectors with
the elements horizontally. This becomes inconvenient when we are
dealing with more than two modes. Therefore we will not make a
notational distinction between mode-1, mode-2, and mode-3 vectors, and
we will allow ourselves to write all vectors organized vertically. It
will be clear from the context  to which mode the vectors belong.
However, when dealing with matrices, we will often talk of 
them as consisting of column vectors.

\subsection{Tensor-Matrix Multiplication}
\label{sec:ten-matmult}

We define \emph{multilinear multiplication of a tensor by a matrix} as
follows.  For concreteness we first present multiplication by one matrix
along the first mode and later for all three modes simultaneously. The
mode-$1$ product of 
a tensor $\cA \in \RR^{l \times m \times n}$ by a matrix $U \in \RR^{p
  \times l}$ is defined\footnote{The notation \eqref{eq:contra}-\eqref{eq:mat-tensor} was
suggested by Lim \cite{desilva08}. An alternative notation was
  earlier given in \cite{latha00}.  Our $\tml[d]{X}{\cA}$ is the same
  as $\cA \times_d X$ in that system.} 
\begin{equation}\label{eq:contra}
 \RR^{p \times m \times n} \ni \mathcal{B} =  \tml[1]{U}{ \cA} ,\qquad
b_{ijk} = \sum_{\a=1}^{l} u_{i \a} a_{ \a jk}  .
\end{equation}
This means that all mode-$1$ fibres in the $3$-tensor $\cA$ are
multiplied by the matrix $U$. Similarly, mode-$2$ multiplication by a
matrix $V \in \RR^{q \x m}$ means that all mode-$2$ fibres are
multiplied by the matrix $V$.  Mode-$3$ multiplication is analogous.
With a third matrix $W\in \RR^{r \x n}$, the tensor-matrix
multiplication\footnote{To clarify the presentation, when dealing with
  a general third order tensor $\cA$ , we will use the convention that
  matrices or vectors $U,U_{k}, u_{i}$, $V, V_{k}, v_{i}$ and
  $W,W_{k},w_{i}$ are exclusively multiplied along the first, second,
  and third mode of $\cA$, respectively, and similarly with matrices
  and vectors $X,Y,Z, x, y, z$.} in all modes is given by
\begin{equation}
  \label{eq:mat-tensor}
\RR^{p \x q \x r}	\ni \cB = \tml{U,V,W}{\cA}, \qquad b_{ijk} = \sum_{\a,\b,\g=1}^{l,m,n} u_{i \a} v_{j \b} w_{k \g}a_{ \a \b \g},
\end{equation}
where the mode of each multiplication is understood from the order in
which the matrices are given.

It is convenient to introduce a separate notation for multiplication
by a transposed matrix $\bar{U} \in \RR^{l \times p}$:
\begin{equation}
  \label{eq:cov}
\RR^{p \times m \times n} \ni \mathcal{C}= \tml[1]{\bar{U}\tp }{\cA}
=\tmr[1]{\cA}{\bar{U}}, \qquad
c_{ijk} = \sum_{\a=1}^{l} a_{\a jk} \bar{u}_{\a i}.
\end{equation}
Let $u \in \RR^l $ be a  vector and $\cA \in \RR^{l \times m
  \times n}$ a tensor. Then
\begin{equation}
  \label{eq:ident-1}
 \RR^{1 \times m   \times n} \ni \cB := \tml[1]{u\tp}{\cA} =
\tmr[1]{\cA}{u} \equiv B \in \RR^{m   \times  n}.
\end{equation}
Thus we identify a tensor with a singleton dimension with  a
matrix. Similarly, with $u \in \RR^l$ and $w \in \RR^n$, we will
identify
\begin{equation}
  \label{eq:ident-13}
\RR^{1 \times m \times 1} \ni \cC :=   \tmr[1,3]{\cA}{u,w}  \equiv
c \in \RR^m,
\end{equation}
i.e., a tensor of order three with two singleton dimensions is
identified with a vector, here in the second mode. Since formulas like \eqref{eq:ident-13} have key importance in this paper, we will state the other two versions as well;
\begin{equation}
  \label{eq:ident-12-23}
\tmr[1,2]{\cA}{u,v}  \in \RR^{n},\qquad \tmr[2,3]{\cA}{v,w}  \in \RR^{l},
\end{equation}
where $v \in \RR^{m}$.

\subsection{Inner Product, Norm, and Contractions}
\label{sec:norm}

Given two tensors $\cA$ and $\cB$ of the same dimensions, we define
the \emph{inner product},
\begin{equation}\label{eq:inner-prod}
\< \cA , \cB \> = \sum_{\a,\b,\g} a_{\a \b \g} b_{\a \b \g}.
\end{equation}
The  corresponding \emph{tensor norm} is
\begin{equation}
  \label{eq:norm}
  \| \cA \| = \< \cA, \cA \>^{1/2}.
\end{equation}
This \emph{Frobenius norm} will be used throughout the paper.
As in the matrix case, the norm is invariant under orthogonal
transformations, i.e.
\[
  \| \cA \| = \left\| \tml{U,V,W}{\cA} \right\| = \| \tmr{\cA}{P,Q,S} \|,
\]
for orthogonal matrices $U$, $V$, $W$, $P$, $Q$, and $S$. This is obvious 
from the fact that multilinear multiplication by orthogonal matrices
does not change the Euclidean length of the corresponding fibres of the tensor.

For convenience we will denote the inner product of
  vectors $x$ and $y$ in any mode (but, of course, the same) by $x\tp
  y$. Let $v = \tmr[1,3]{\cA}{u,w}$; then, for a matrix $V=[v_1 \; v_2
  \; \cdots \; v_p]$ of mode-2 vectors, we have
\[
V\tp v = \tml[2]{V\tp}{(\tmr[1,3]{\cA}{u,w})} = \tmr{\cA}{u,V,w} \in
\RR^{1 \times p \times 1}. 
\]

The following well-known result will be needed.

\begin{lemma}\label{lem:LS}
  Let $\cA \in \RR^{l \times m \times n}$ be given along with three
  matrices with orthonormal columns, $U \in \RR^{l \times p}$, $V \in
  \RR^{m \times q}$, and $W \in \RR^{n \times r}$, where $p \leq l$,
  $q \leq m$, and $r \leq n$. Then the least squares problem
\[
\min_{\cS} \| \cA - \tml{U,V,W}{\cS} \|
\]
has the unique solution
\[
\cS= \tml{U\tp ,V\tp ,W\tp }{\cA} = \tmr{\cA}{U,V,W}.
\]
The elements of the tensor $\cS$ are given by 
\begin{equation}
  \label{eq:elem-S}
  s_{\lambda\mu\nu}=\tmr{\cA}{u_\lambda,v_\mu,w_\nu}, \quad 1 \leq
  \lambda \leq p, \quad 1 \leq \mu \leq q, \quad 1 \leq \nu \leq r.
\end{equation}
\end{lemma}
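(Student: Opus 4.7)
The plan is to reduce the tensor least squares problem to a matrix-style orthogonal decomposition by extending $U$, $V$, $W$ to full orthogonal bases and exploiting the orthogonal invariance of the Frobenius norm stated right before the lemma.

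Concretely, first I would choose matrices $U_\perp \in \RR^{l \times (l-p)}$, $V_\perp \in \RR^{m \times (m-q)}$, $W_\perp \in \RR^{n \times (n-r)}$ so that $\tilde U = [U\; U_\perp]$, $\tilde V = [V\; V_\perp]$, $\tilde W = [W\; W_\perp]$ are square orthogonal. Applying the contraction $\tmr{\,\cdot\,}{\tilde U, \tilde V, \tilde W}$ to both $\cA$ and $\tml{U,V,W}{\cS}$ does not change the norm. Setting $\cB = \tmr{\cA}{\tilde U, \tilde V, \tilde W}$, a direct computation gives
\[
\tmr{\tml{U,V,W}{\cS}}{\tilde U, \tilde V, \tilde W}
= \tml{\tilde U\tp U,\, \tilde V\tp V,\, \tilde W\tp W}{\cS},
\]
and since $\tilde U\tp U$, $\tilde V\tp V$, $\tilde W\tp W$ are the first $p$, $q$, $r$ columns of identity matrices respectively, the right-hand side is simply the tensor $\widetilde{\cS} \in \RR^{l \times m \times n}$ whose block $\widetilde{\cS}(1{:}p,1{:}q,1{:}r)$ equals $\cS$ and whose remaining entries are zero.

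The objective therefore splits as
\[
\|\cA - \tml{U,V,W}{\cS}\|^2
= \|\cB(1{:}p,1{:}q,1{:}r) - \cS\|^2 + R,
\]
where $R$ collects all squared entries of $\cB$ outside the leading $p \times q \times r$ block and is independent of $\cS$. The first term is a strictly convex quadratic in the free entries of $\cS$, uniquely minimized (to zero) by $\cS = \cB(1{:}p,1{:}q,1{:}r) = \tmr{\cA}{U,V,W}$, which establishes both existence and uniqueness.

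For the element formula \eqref{eq:elem-S}, I would simply unfold the definition \eqref{eq:mat-tensor} applied to the transposed multiplications in \eqref{eq:cov}: the $(\lambda,\mu,\nu)$ entry of $\tmr{\cA}{U,V,W}$ is $\sum_{\alpha,\beta,\gamma} a_{\alpha\beta\gamma} u_{\alpha\lambda} v_{\beta\mu} w_{\gamma\nu}$, which is exactly the fully contracted scalar $\tmr{\cA}{u_\lambda, v_\mu, w_\nu}$ in the sense of \eqref{eq:ident-13}--\eqref{eq:ident-12-23}. The only mild obstacle is bookkeeping: making sure that the identification of a tensor with singleton dimensions as a matrix or vector is applied consistently, and that the multiplication modes in $\tml{\tilde U\tp U, \tilde V\tp V, \tilde W\tp W}{\cS}$ correctly match modes $1,2,3$. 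Once that is written down carefully, the rest is immediate from orthogonal invariance.
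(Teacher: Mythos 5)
Your proof is correct and takes essentially the same route as the paper: extend $U,V,W$ to square orthogonal matrices, invoke orthogonal invariance of the Frobenius norm to split the objective into $\|\tmr{\cA}{U,V,W}-\cS\|^2$ plus a constant independent of $\cS$, and read off the unique minimizer, with \eqref{eq:elem-S} following directly from the definition of the tensor--matrix product. If anything, your bookkeeping of the constant term (all blocks of the rotated tensor outside the leading $p\times q\times r$ block) is slightly more explicit than the paper's, which records only the $\|\tmr{\cA}{U_\perp,V_\perp,W_\perp}\|^2$ contribution.
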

\begin{proof}
  The proof is a straightforward generalization of the corresponding
  proof for matrices. Enlarge each of the matrices so that it becomes
  square and orthogonal, i.e., put
\[
\bar U = [U \,\, U_\perp], \quad \bar V = [V \, \,V_\perp], \quad
\bar W = [W \,\, W_\perp].
\]
Introducing the residual $ \mathcal{R} =\cA - \tml{U,V,W}{\cS}$ and using the invariance of the norm under orthogonal
transformations, we get
\begin{equation*}
\| \mathcal{R}\|^2 =
\left\|\tmr{\mathcal{R}}{\bar{U},\bar{V},\bar{W}}
 \right\|^2  =
\| \tmr{\cA}{U ,V ,W} - \cS
\|^2 + C^2,
\end{equation*}
where $C^2 = \| \tmr{\cA}{U_\perp,V_\perp,W_\perp} \|^2$ does not
depend on $\cS$. The relation \eqref{eq:elem-S} is obvious from the
definition of tensor-matrix product.
\end{proof}

The inner product \eqref{eq:inner-prod} can be considered as a special
case of the \emph{contracted product of two tensors},
cf. \cite[Chapter 2]{kono:63},
which is a tensor (outer) product followed by a contraction along specified
modes. Thus, if $\cA$ and $\cB$ are $3$-tensors,
we define, using essentially the notation of \cite{bader06b},

\addtocounter{equation}{1}
\begin{align}
  \cC &= \left\< \cA, \cB \right\>_{1}\,, &
   c_{jkj'k'} &= \sum_\a a_{\a j k} b_{\a j' k'}\,, & &
   \mbox{($4$-tensor)}\,, \label{eq:cntr1}\tag{\theequation.a}\\
  D &= \left\< \cA, \cB \right\>_{1,2}\,, &
   d_{k k'} & = \sum_{\a,\b} a_{\a \b k} b_{\a \b k'}\,, & &
   \mbox{($2$-tensor)}, \label{eq:cntr2}\tag{\theequation.b}\\
  e & = \< \cA, \cB \> = \left\< \cA, \cB \right\>_{1\dots3}\,, & e &
   = \sum_{\a,\b, \g} a_{\a \b \g} b_{\a \b \g}\,, & &
   \mbox{(scalar)}. \label{eq:cntr3}\tag{\theequation.c}
\end{align}
It is required that the contracted dimensions are equal in the two
tensors. We will refer to the first two as \emph{partial
  contractions}. The subscript 1 in $\langle\mathcal{A},\mathcal{B} \rangle_{1}$ and 1,2 in $\langle\mathcal{A},\mathcal{B} \rangle_{1,2}$ indicate that the contraction is over the first mode in both arguments and in the first and second mode in both arguments, respectively. It is also convenient to introduce a notation when contraction is performed in all but one mode. For example the product in \eqref{eq:cntr2} may also be written 
\begin{equation}
\left\< \cA, \cB \right\>_{1,2} \equiv \left\< \cA, \cB \right\>_{-3}\,.
\end{equation}
The definition of contracted products  is valid also  when the 
tensors are of different order. The only assumption is that the dimension
of the correspondingly contracted  modes are the same in the two
arguments. The dimensions of the resulting product are in the order
given by the non-contracted modes of the first argument followed by
the non-contracted modes of the second argument.

%
%
%
%
%
%

\subsection{Tensor Matricization} 
Later on we will also need the notion of tensor matricization. Any
given third order tensor $\cA\in \RR^{l \x m \x n}$ can be matricized
alone its different modes. These matricizations will be written as
$A^{(1)}$, which is an $l \x mn$ matrix,  $A^{(2)}$, which is an $m \x
ln$ matrix, and $A^{(3)}$ is an $n \x lm$ matrix. The exact relations
of the entries of $\cA$ to the three different matricizations can be
found in \cite{elden09}. It is sufficient, for our needs in this
paper, to recall that the matricizations of a given multilinear
tensor-matrix product $\cB = \tml{U,V,W}{\cA}$ have the following
forms: 
\begin{align*}
B^{(1)} & = U A^{(1)}(V \otimes W)\tp, \\
B^{(2)} & = V A^{(2)}(U \otimes W)\tp, \\
B^{(3)} & = W A^{(3)}(U \otimes V)\tp.
\end{align*}

\section{Two Krylov Methods for Matrices}
\label{sec:two-matrix}

In this section we will describe briefly the two matrix Krylov methods
that are the starting point of our generalization to tensor Krylov
methods.

\subsection{The Arnoldi Procedure}
\label{sec:arnoldi}

The Arnoldi procedure is used to compute a low-rank
approximation/factor\-iza\-tion \eqref{eq:arn-fact} of a
square, in general nonsymmetric matrix $A$. It requires a starting
vector $u_1 =: U_{1}$ (or, alternatively, $v_1=:V_1$), and in each step the new vector is orthogonalized
against all previous vectors using the modified  Gram-Schmidt process.  We
present the Arnoldi procedure in the style of \cite[p. 303]{stew:01}.

\begin{algorithm}
\caption{Arnoldi Procedure}
\label{alg:arnoldi}
\begin{algorithmic}
\FOR {$i=1,2,\ldots,k$}
\STATE \textbf{1 }$h_i = U_i\tp A u_i$
\STATE \textbf{2 }$h_{i+1,i} u_{i+1}=A u_i - U_i h_i$
\STATE \textbf{3 }$U_{i+1} = [U_{i}\; u_{i+1}]$
\STATE \textbf{4 }$H_i = \begin{bmatrix}H_{i-1} & h_i \\
                                        0 & h_{i+1,i}
                         \end{bmatrix}
       $
\ENDFOR
\end{algorithmic}
\end{algorithm}

The constant $h_{i+1,i}$ is used to normalize the new vector to length one.
Note that the matrix $H_k$ in the the factorization
\eqref{eq:arn-fact} is obtained by collecting the orthonormalization
coefficients $h_i$ and $h_{i+1,i}$ in an upper Hessenberg matrix.

\subsection{Golub-Kahan Bidiagonalization}
\label{sec:GKB}

Let  $A \in \RR^{m \times n}$ be a matrix, and let
$\b_1 u_1, v_0 = 0$, where $\|u_1\| = 1$, be  starting vectors.   The
Golub-Kahan bidiagonalization procedure \cite{golkah:65} is defined by
the following recursion.
\begin{algorithm}
\caption{Golub-Kahan bidiagonalization}
\label{alg:gk-bidiag}
\begin{algorithmic}
\FOR{$i = 1,2,\dots,k$}
\STATE \textbf{1 } $\a_i v_i = A\tp u_i - \b_i v_{i-1}$
\STATE \textbf{2 } $\b_{i+1} u_{i+1} = A v_i - \a_i u_i$
\ENDFOR
\end{algorithmic}
\end{algorithm}

The scalars $\a_i, \, \b_i$ are chosen to normalize the generated
vectors $v_i, u_i$. Forming the matrices $U_{k+1} = [u_1\, \cdots
\,u_{k+1}] \in \RR^{m \x (k+1)}$ and $V_{k} = [v_1\, \cdots\, v_k]\in
\RR^{n \x k}$, it is straightforward to show that
\begin{equation}
  \label{eq:GK-fact-1}
A V_k = U_{k+1} B_{k+1}, \qquad A\tp U_k = V_k \widehat{B}_k,
\end{equation}
where $V_k\tp V_k = I, \; U_{k+1}\tp U_{k+1} = I,$ and
\begin{equation}
\label{eq:Bk}
B_{k+1} =
\begin{bmatrix}
  \a_1 &   &   &   \\
  \b_2 & \a_2 &   &   \\
    & \ddots & \ddots &   \\
    &   & \b_k & \a_k \\
    &   &   & \b_{k+1} \\
\end{bmatrix}
=
\begin{bmatrix}
  \widehat{B}_k \\
\beta_{k+1} e_k\tp
\end{bmatrix}
\in \RR^{(k+1) \x k}
\end{equation}
is bidiagonal\footnote{Note that the two sequences of vectors become
orthogonal automatically; this is due to the fact that the
bidiagonalization procedure is equivalent to the Lanczos process
applied to the two symmetric matrices $A A\tp$ and $A\tp A$.}.

Using tensor notation from Section \ref{sec:ten-matmult}, and a special
case of the identification \eqref{eq:ident-13},   we may
express the two steps of the recursion as

\begin{algorithm}
\caption{Golub-Kahan bidiagonalization in tensor notation}
\label{alg:gk-bidiag-tensor}
\begin{algorithmic}
\FOR{$i = 1,2,\dots,k$}
\STATE \textbf{1 } $\a_i v_i = \tmr[1]{A}{u_i} - \b_i v_{i-1}$
\STATE \textbf{2 } $\b_{i+1} u_{i+1} = \tmr[2]{A}{v_i} - \a_i u_i$
\ENDFOR
\end{algorithmic}
\end{algorithm}

%
We observe that the $u_i$ vectors ``live'' in the first mode  of $A$,
and we generate the sequence $u_2, u_3, \ldots$, by 
multiplication of the $v_i$ vectors in the second mode, and vice
versa.



\section{Tensor Krylov Methods}
\label{sec:tensor-krylov}

\subsection{A Minimal Krylov Recursion}
\label{sec:minimal}

In this subsection we will present the main process for the tensor Krylov
methods. We will further prove that, for tensors with $\rank(\cA) =
(p,q,r)$, we can capture all three subspaces associated to $\cA$
within $\max\{p,q,r\}$ steps of the algorithms. Finally we will state
a partial factorization that is induced by the procedure.  

Let $\cA \in \RR^{l \times m \times n}$ be a given tensor of order
three. It is now straightforward to generalize the Golub-Kahan
procedure, starting from Algorithm 
\ref{alg:gk-bidiag-tensor}. Assuming we have two  
starting vectors, $u_1 \in \RR^l$ and $v_1 \in \RR^{m}$ we can obtain a third mode vector $w_1 = \tmr[1,2]{\cA}{u_{1},v_{1}} \in \RR^n$. We can then generate three sequences of vectors
\begin{align}
u_{i+1} & = \tmr[2,3]{\cA}{v_{i},w_{i}}, \label{eq:ui}\\
v_{i+1} & = \tmr[1,3]{\cA}{u_{i},w_{i}}, \label{eq:vi}\\
w_{i+1}& = \tmr[1,2]{\cA}{u_{i},v_{i}}, \label{eq:wi}
\end{align}
for $ i = 1,\dots,k$. 
We see that the first mode sequence of vectors $(u_{i+1})$ are generated by
multiplication of second and third mode vectors $(v_i)$ and $(w_i)$ by the
tensor $\cA$, and  similarly for the other two sequences. The newly generated
vector is immediately orthogonalized against all the previous ones in its
mode, using the modified Gram-Schmidt process. An obvious alternative  to 
 \eqref{eq:vi} and \eqref{eq:wi} that
is consistent with the Golub-Kahan recursion is  to use the most
recent vectors in computing the new one. 
This recursion is presented in Algorithm~\ref{alg:minKrylov}. 
In the algorithm description it is
understood that $U_{i}=[u_1\, u_2\, \cdots \, u_{i}]$, etc.  
The coefficients ~$\a_u$, $\a_v$, and $\a_w$   are used to normalize the
generated vectors to length one. 


For
reasons that will become clear later, we will refer to this recursion
as a minimal Krylov recursion.

\begin{algorithm}
\caption{Minimal Krylov recursion}
\label{alg:minKrylov}
\begin{algorithmic}
\STATE Given: two normalized starting vectors $u_1$ and $v_1$, 
\STATE $\a_w w_1 = \tmr[1,2]{\cA}{u_1,v_1}$
\FOR {$i=1,2,\ldots,k-1$}
\STATE  $\widehat{u}=\tmr[2,3]{\cA}{v_i,w_i};\quad 
         h_u= U_{i}\tp \widehat{u}$
\STATE  $ \alpha_u u_{i+1} =
               \widehat{u} - U_{i} h_u$; \quad 
  $H_{i}^{u}=\begin{bmatrix} H_{i}^{u} & h_u \\ 0 & \alpha_u \end{bmatrix}$     

\STATE  $\widehat{v}=\tmr[1,3]{\cA}{u_{i+1},w_i}; \quad
             h_v=V_{i}\tp \widehat{v}$
\STATE  $\a_v v_{i+1} =
               \widehat{v} - V_{i} h_v$; \quad 
  $H_{i}^{v}=\begin{bmatrix} H_{i}^{u} & h_v \\ 0 & \alpha_v \end{bmatrix}$     
\STATE  $\widehat{w}=\tmr[1,2]{\cA}{u_{i+1},v_{i+1}}; \quad
               h_w=W_{i}\tp \widehat{w}$
\STATE  $\a_w w_{i+1} = 
               \widehat{w} - W_{i} h_w$; \quad 
  $H_{i}^{w}=\begin{bmatrix} H_{i}^{w} &  h_w \\ 0 & \alpha_w \end{bmatrix}$     
\ENDFOR
\end{algorithmic}
\end{algorithm}

  The process may  break down, i.e. we obtain a
new vector $u_{i+1}$, for instance,  which is linear combination of the
vectors in $U_{i}$. This can happen in two principally different
situations. The first one is when, for example, the vectors in $U_{i}$
span the range space of $A^{(1)}$. If this is the case we are done
generating new $u$-vectors. The second case is when we get a ``true
breakdown''\footnote{In the matrix case a breakdown occurs in the Krylov
recursion for instance if the matrix and the starting vector have the structure 
\[
A=
\begin{bmatrix}
  A_1 & 0 \\ 0 & A_2
\end{bmatrix}, \qquad 
v=
\begin{bmatrix}
  v_1 \\ 0
\end{bmatrix}.
\]
 An analogous situation
can occur with tensors.}, $u_{i+1}$ is a linear combination of vectors in $U_{i}$,
but $U_{i}$ does not span the entire range space of $A^{(1)}$. This
can be  fixed by taking a vector $u_{i+1} \perp U_{i}$ with $u_{i+1}$
in range of $A^{(1)}$.

\subsubsection{Tensors with Given Cubical Ranks}
\label{sec:cubical-rank}
Assume that the $l \x m \x n$ tensor has a cubical low rank,
i.e. $\rank(\cA) = (r,r,r)$ with $r \leq \min\{l, m, n \}$.  
Then there exist a tensor $\cC \in \RR^{r \x r \x r}$, and full
column rank matrices  $X,Y,Z$ such that    $\cA =
\tml{X,Y,Z}{\cC}$. 

We will now prove that, when the starting vectors $u_{1}$, $v_{1}$ and $w_{1}$ are in the range of the respective subspaces, the minimal Krylov procedure generates matrices $U,V,W$,
such that $\spn(U) = \spn(X)$, $\spn(V) = \spn(Y)$ and
$\spn(W) = \spn(Z)$ after $r$ steps. Of course,  for the low
multilinear rank approximation problem of tensors it is the subspaces
that are important, not their actual representation. The specific
basis spanning e.g. $\spn(X)$ is ambiguous.  

 
\begin{theorem}
\label{thm:minKrylov1}
Let $\cA = \tml{X,Y,Z}{\cC} \in \RR^{l \x m \x n}$ with  $\rank(\cA) =
(r,r,r)$. Assume we have starting vectors in the associated range spaces, i.e. 
$u_{1} \in \spn(X)$, $v_{1} \in \spn(Y)$, $w_{1} \in \spn(Z)$. Assume also that the process does not break down\footnote{The newly generated vector is not a linear
  combination of previously generated vectors.} within $r$ iterations. Then the minimal Krylov procedure in Algorithm \ref{alg:minKrylov}  generates matrices
$U_{r},V_{r},W_{r}$ with  
\[
\spn(U_{r}) = \spn(X), \quad \spn(V_{r}) = \spn(Y), \quad \spn(W_{r})= \spn(Z).
\]
\end{theorem}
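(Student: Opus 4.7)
The plan is to argue by induction on the iteration index that each newly generated vector stays in the correct range subspace, and then use a dimension count at iteration $r$ to conclude. Throughout, the multilinearity of $\tml{X,Y,Z}{\cC}$ under partial contractions is the workhorse.

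First I would establish the key identity: if $v \in \spn(Y)$ and $w \in \spn(Z)$, then $\tmr[2,3]{\cA}{v,w} \in \spn(X)$. From $\cA = \tml{X,Y,Z}{\cC}$ and the definition of multilinear multiplication, a direct computation shows that
\[
\tmr[2,3]{\cA}{v,w} = \tmr[2,3]{\tml{X,Y,Z}{\cC}}{v,w} = \tml[1]{X}{\bigl(\tmr[2,3]{\cC}{Y\tp v,\, Z\tp w}\bigr)} = X\,\tilde c,
\]
where $\tilde c = \tmr[2,3]{\cC}{Y\tp v, Z\tp w} \in \RR^{r}$. This is nothing but the tensor analog of $(XCY\tp)v = X(C(Y\tp v))$. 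Symmetric identities hold for the other two modes.

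Next I would run the induction. The base case is the hypothesis $u_1 \in \spn(X)$, $v_1 \in \spn(Y)$, $w_1 \in \spn(Z)$. For the inductive step, assume $u_j \in \spn(X)$, $v_j \in \spn(Y)$, $w_j \in \spn(Z)$ for every $j \le i$. The unnormalized vector produced by Algorithm \ref{alg:minKrylov} is $\widehat u = \tmr[2,3]{\cA}{v_i,w_i}$, which lies in $\spn(X)$ by the identity above. The modified Gram--Schmidt step subtracts $U_i h_u$, a linear combination of columns of $U_i \subset \spn(X)$, and normalizes; both operations preserve membership in $\spn(X)$. Hence $u_{i+1} \in \spn(X)$. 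The analogous arguments with $\tmr[1,3]{\cA}{u_{i+1},w_i}$ and $\tmr[1,2]{\cA}{u_{i+1},v_{i+1}}$ give $v_{i+1} \in \spn(Y)$ and $w_{i+1} \in \spn(Z)$.

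Finally, after $r$ steps without breakdown, the columns of $U_r$ are orthonormal (hence linearly independent) and all lie in $\spn(X)$, which has dimension exactly $r$ because $X$ has full column rank $r$. A dimension count forces $\spn(U_r) = \spn(X)$, and likewise $\spn(V_r) = \spn(Y)$, $\spn(W_r) = \spn(Z)$. There is no serious obstacle here; the only delicate point is that the no-breakdown assumption is what guarantees we actually produce $r$ independent vectors in each mode rather than filling a strictly smaller subspace, and the appeal to the multilinear identity must be stated with enough care to make clear that the range-preservation holds mode-by-mode, not just in the initial step.
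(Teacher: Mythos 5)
Your proposal is correct and follows essentially the same route as the paper's own proof: the multilinear identity $\tmr[2,3]{\cA}{v,w}=X\,\tilde c$ (and its mode-2, mode-3 analogs) showing each generated vector lies in the corresponding range space, combined with orthonormality under the no-breakdown assumption and a dimension count after $r$ steps. The only difference is that you package the range-preservation as an explicit induction, which the paper states more briefly (the identity in fact holds for arbitrary $v,w$, so the induction hypothesis on membership is not even needed beyond the starting vectors); this is a matter of presentation, not substance.
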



\begin{proof}
First observe that the recursion generates vectors in the span of $X$,
$Y$, and $Z$, respectively:  
\begin{align*}
\tmr[2,3]{\cA}{v,w} & = \tmr{\cC}{X\tp,Y\tp v,Z\tp w} =  \tmr{\cC}{X\tp,\bar{v},\bar{w}} = X c_{1},\\
\tmr[1,3]{\cA}{u,w} & = \tmr{\cC}{X\tp u,Y\tp,Z\tp w} =  \tmr{\cC}{\bar{u},Y\tp,\bar{w}} = Y c_{2}, \\
\tmr[1,2]{\cA}{u,v} & = \tmr{\cC}{X\tp u,Y\tp v,Z\tp} =
\tmr{\cC}{\bar{u},\bar{v},Z\tp} = Z c_{3}, 
\end{align*}
where in the first equation $\bar{v} = Y\tp v$, $\bar{w} = Z\tp w$ and $c_1 =
\tmr[2,3]{\cC}{\bar{v},\bar{w}}$, and  the other two equations are
analogous. Consider the first  mode vector $u$. Clearly it  is a
linear combination of the column vectors in $X$. Since we 
orthonormalize every newly generated $u$-vector against all the
previous vectors, and since we  assume that the process does not break
down, it follows that $\dim(\spn([u_{1} \, \cdots \,u_{k}]) ) = k$ for
$k \leq r$ will increase by one with every new $u$-vector. 
Given that $u_{1} \in \spn(X)$ then for $k = r$ we have that $\spn([u_{1}\,\cdots
\,u_{r}])= \spn(X)$. The  proof is analogous for the second and third modes.
\end{proof}

We would like to make e few remarks on this theorem:
\paragraph{Remark (1)} 
It is straightforward to show that when the starting vectors are not in the
associated range spaces we would only need to do one more iteration, i.e. in total $r+1$ iterations, to obtain matrices $U_{r+1}$, $V_{r+1}$ and $W_{r+1}$ that would span the column spaces of $X$, $Y$ and $Z$, respectively. 
\paragraph{Remark (2)} It is easy to obtain
starting vectors $u_{1} \in \spn(X)$, $v_{1} \in \spn(Y)$ and $w_{1}
\in \spn(Z)$. Choose any single nonzero mode-$k$ vector or the mean of
the mode-$k$ vectors. 
\paragraph{Remark (3)} Even if we do not choose starting vectors in
the range spaces of $X,Y,Z$ and run the minimal Krylov procedure $r+1$
steps we can easily obtain a matrix $U_{r}$ spanning the correct
subspaces. To do this just observe that $U_{r+1}\tp A^{(1)} = U_{r+1}\tp X
C^{(1)}(Y \otimes Z)\tp$ is an $(r+1) \x mn $ matrix with rank~$r$.

\subsubsection{Tensors with General Low Multilinear Rank}
\label{sec:general-rank}
Next we  discuss the case when the tensor $\cA \in \RR^{l \x m \x
  n}$ has $\rank(\cA) = (p,q,r)$ with $p<l$, $q<m,$ and $r < n$.
Without loss of generality we can assume $ p \leq q \leq r$.  
 Then $\cA = \tml{X,Y,Z}{\cC}$ where $\cC$ is
a $p \x q \x r$ tensor and $X,Y,Z$ are full column rank matrices with
conformal dimensions. The 
discussion assumes exact arithmetic and that no breakdown occurs.%
%

From the proof of Theorem \ref{thm:minKrylov1} we see that the vectors
generated are in the span of $X$, $Y$, and $Z$, respectively.    
 Therefore, after  having performed $p$ steps we
will not be able to generate any new vector in the first mode. This
can be detected from the fact that the result of the orthogonalization
is zero. We can now continue generating vectors in the second and
third modes, using any of the first mode vectors, or a (possibly
random) linear combination of them\footnote{Also the optimization
  approach of Section \ref{sec:kryl-opt} can be used.}. This can be
repeated until we have generated $q$ vectors in the second and third
modes. The final $r-q$ mode-3 vectors can then be generated using
combinations of mode-1 and mode-2 vectors that have not been used
before, or, alternatively, random linear combinations of previously
generated mode-1 and mode-2 vectors.  We refer to the procedure
described in this paragraph as the \emph{modified minimal Krylov
  recursion}.

\begin{theorem}
\label{thm:minKrylov22}
Let $\cA \in \RR^{l \x m \x n}$ be a tensor of $\rank(\cA) = (p,q,r)$
with $p \leq q \leq r$. We can then write $\cA = \tml{X,Y,Z}{\cC}$,
where $\cC$ is a $p \x q \x r$ tensor and $X,Y,Z$ are full column rank
matrices with conforming dimensions. Assume that the starting vectors
satisfy $u_{1} \in \spn{(X)}$, $v_{1} \in \spn{(Y)}$ and $w_{1} \in
\spn{(Z)}$.  Assume also that the process does not break down except
when we obtain a set of vectors spanning the full range spaces of the
different modes.  Then in exact arithmetic, and in a total of $r$
steps the modified  minimal Krylov recursion produces matrices
$U_{p}$, $V_{q}$ and $W_{r}$, which span the same
subspaces as $X,Y$, and $Z$, respectively.
\end{theorem}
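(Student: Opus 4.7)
The plan is to build directly on the proof of Theorem \ref{thm:minKrylov1} and extend it to the non-cubical case, exploiting the fact that contraction in two modes of $\cA$ always lands in the correct range space \emph{regardless of which auxiliary vectors are used}. Concretely, using $\cA = \tml{X,Y,Z}{\cC}$, the same index manipulation as in the cubical proof shows
\[
\tmr[2,3]{\cA}{v,w} = X \tmr[2,3]{\cC}{Y\tp v,\,Z\tp w} \in \spn(X),
\]
for arbitrary $v \in \RR^{m}$ and $w \in \RR^{n}$, and analogously for the other two modes. This is the key fact that licenses the freedom to plug in ``previously used'' or random linear combinations in later iterations without leaving the correct subspace.

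First I would handle the initial $p$ iterations exactly as in Theorem \ref{thm:minKrylov1}: each candidate $\widehat u$, $\widehat v$, $\widehat w$ lies in $\spn(X)$, $\spn(Y)$, $\spn(Z)$ respectively; since we assume no premature breakdown, Gram--Schmidt yields a linearly independent set in each mode. After $p$ iterations one obtains $U_p$, $V_p$, $W_p$ with $\spn(U_p) = \spn(X)$ (by dimension count), and $\spn(V_p) \subseteq \spn(Y)$, $\spn(W_p) \subseteq \spn(Z)$ of dimension $p$.

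Next I would justify the second phase: for $i = p+1,\ldots,q$ we skip the mode-1 step (any $\tmr[2,3]{\cA}{v_i,w_i}$ would orthogonalize to zero against the complete basis $U_p$ of $\spn(X)$) and only generate $v_{i}$ and $w_{i}$ using some $u \in \spn(U_p)$ (possibly a random combination) together with an available $w$. By the key fact above, each such $\widehat v$ still lies in $\spn(Y)$ and each $\widehat w$ in $\spn(Z)$, and the no-breakdown hypothesis ensures that the Gram--Schmidt step yields a new independent direction each time. Hence after $q$ total iterations $\spn(V_q) = \spn(Y)$ by dimension count, while $\spn(W_q) \subseteq \spn(Z)$ has dimension $q$. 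For the third phase $i = q+1,\ldots,r$ one generates only $w_i$ using vectors from $U_p$ and $V_q$ (or combinations thereof); each $\widehat w \in \spn(Z)$, the no-breakdown hypothesis adds one independent direction per iteration, and after the final $r-q$ steps $\spn(W_r) = \spn(Z)$. The iteration count totals $p + (q-p) + (r-q) = r$.

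The main obstacle, which I expect to be mostly conceptual rather than technical, is making sure the ``modified'' bookkeeping is clean: in each of the three phases one must specify which previously generated vectors feed the contractions, and verify that the dimensionality argument (subspace containment $+$ Gram--Schmidt $+$ no breakdown $\Rightarrow$ rank grows by one) really applies, independently of that choice. Since the containment $\widehat u \in \spn(X)$, $\widehat v \in \spn(Y)$, $\widehat w \in \spn(Z)$ holds for \emph{any} inputs in the other two modes, the argument is robust to the choice; the assumed absence of non-trivial breakdown then delivers full spans in each mode by the claimed iteration counts.
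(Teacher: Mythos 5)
Your proposal is correct and follows essentially the same route as the paper: the paper proves this result by the discussion preceding the theorem, which invokes the range-containment fact from the proof of Theorem \ref{thm:minKrylov1} (contractions land in $\spn(X)$, $\spn(Y)$, $\spn(Z)$ regardless of the auxiliary vectors used), runs $p$ full steps, then continues with mode-2 and mode-3 vectors only up to $q$ steps, and finally mode-3 vectors only up to $r$ steps, exactly as you argue via dimension counting under the no-breakdown hypothesis.
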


The actual numerical implementation of the procedure in floating point
arithmetic is, of course, much more complicated. For instance, the
ranks will never be exact, so one must devise a criterion for
determining the numerical ranks that will depend on the choice of
tolerances. This will be the topic of our future research. 

\subsubsection{Partial Factorization}
\label{sec:partial-fact}
To our knowledge there is no simple way of writing the minimal Krylov
recursion directly as a tensor Krylov factorization, analogous to
\eqref{eq:GK-fact-1}.  However, having generated 
three orthonormal matrices $U_k$, $V_k,$ and $W_k$, we can easily compute a
low-rank tensor approximation of $\cA$ using Lemma~\ref{lem:LS}, 
\begin{equation}
  \label{eq:low-rank}
\cA \approx \tml{U_k,V_k,W_k}{\cH}, \qquad 
\cH= \tml{U_k\tp,V_k\tp,W_k\tp}{\cA} \in \RR^{k \times k \times
  k}.  
\end{equation}
Obviously, $\cH_{\lambda\mu\nu}=\tmr{\cA}{u_\lambda,v_\mu,w_\nu}$. 
Comparing with Algorithm \ref{alg:minKrylov} we see that $\cH$
contains elements from the Hessenberg matrices
$H^{u},H^{v},H^{w}$, which  contain the
orthogonalization and normalization coefficients. However, not all the
elements in $\cH$ are generated in the recursion, only those that are
close to the ``diagonals''. Observe also that $\cH$ has $k^{3}$
elements, whereas the minimal Krylov procedure generates three
matrices with total number of $3k^{2}$ elements. We now show   that
the minimal Krylov procedure induces a certain 
\textit{partial tensor-Krylov factorization}.

\begin{proposition}\label{prop:minK-fact}
  Assume that $U_k$, $V_k$, and $W_k$   have been generated by the
  minimal Krylov recursion and that $\cH =
  \tmr{\cA}{U_k,V_k,W_k}$. Then, for $1 \leq i \leq k-1$, 
  \begin{align}
    (\tmr[2,3]{\cA}{V_k,W_k})(:,i,i) &= (\tml[1]{U_k}{\cH})(:,i,i) = U_{k} H^{u}(:,i),
    \label{eq:minK-factU} \\ 
    (\tmr[1,3]{\cA}{U_k,W_k})(i+1,:,i) &= (\tml[2]{V_{k}}{\cH})(i+1,:,i) = V_{k}H^{v}(:,i),
    \label{eq:minK-factV}\\ 
    (\tmr[1,2]{\cA}{U_k,V_{k}})(i+1,i+1,:) &= (\tml[3]{W_{k}}{\cH})(i+1,i+1,:) = W_{k} H^{w}(:,i).\label{eq:minK-factW}
  \end{align}
\end{proposition}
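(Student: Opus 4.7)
The plan is that all three identities can be read off directly from the bookkeeping already contained in Algorithm \ref{alg:minKrylov}, together with the orthonormality of $U_k,V_k,W_k$ and the characterization $\cH(\lambda,\mu,\nu) = \tmr{\cA}{u_\lambda,v_\mu,w_\nu}$ from Lemma~\ref{lem:LS}. I would treat the three modes in sequence; they have the same structure, so I will sketch the first in full and then indicate the index shift responsible for the $i+1$'s in \eqref{eq:minK-factV} and \eqref{eq:minK-factW}.

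For \eqref{eq:minK-factU}, the slice $(\tmr[2,3]{\cA}{V_k,W_k})(:,i,i)$ is, by the definition of partial contraction, just $\tmr[2,3]{\cA}{v_i,w_i}$, which is precisely the vector $\widehat u$ formed at iteration $i$. The same line of the algorithm gives $\widehat u = U_i h_u + \alpha_u u_{i+1}$, and the construction of $H^u$ shows that its $i$-th column equals $(h_u\tp,\alpha_u,0,\ldots,0)\tp$. Since entries below position $i+1$ are zero, this rewrites as $\widehat u = U_k H^u(:,i)$, which is the rightmost equality in \eqref{eq:minK-factU}. For the middle equality, Lemma~\ref{lem:LS} gives $\cH(\lambda,i,i) = u_\lambda\tp \widehat u$, and orthonormality of $U_k$ then yields $\cH(\lambda,i,i) = u_\lambda\tp U_k H^u(:,i) = H^u(\lambda,i)$. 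Hence $\cH(:,i,i) = H^u(:,i)$, so that $(\tml[1]{U_k}{\cH})(:,i,i) = U_k\,\cH(:,i,i) = U_k H^u(:,i)$, closing the chain.

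The identities \eqref{eq:minK-factV} and \eqref{eq:minK-factW} follow by the identical two-step argument, with one notational subtlety: at iteration $i$ the algorithm uses the already-updated $u_{i+1}$ when forming $\widehat v = \tmr[1,3]{\cA}{u_{i+1},w_i}$, and both $u_{i+1}$ and $v_{i+1}$ when forming $\widehat w = \tmr[1,2]{\cA}{u_{i+1},v_{i+1}}$. This is precisely why the fixed indices in \eqref{eq:minK-factV} and \eqref{eq:minK-factW} are $i+1$ in those modes that have already been updated earlier in the same sweep.

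I do not anticipate a real obstacle. The proof is essentially an exercise in tracking which vector lives at which position of $\cH$; the only pitfall worth flagging is keeping the mode indices consistent with the sequential update order of the algorithm, and remembering that each column of $H^u$, $H^v$, $H^w$ referred to in the statement is implicitly padded with zeros below its nontrivial entries so that multiplication by the full $U_k$, $V_k$, $W_k$ is well-defined.
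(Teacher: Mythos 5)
Your proof is correct and follows essentially the same route as the paper's: identify the slice with the vector $\widehat u$ (respectively $\widehat v$, $\widehat w$) generated at step $i$ of the recursion, use the algorithm's update to write it as $U_kH^u(:,i)$, and use orthonormality together with $\cH(\lambda,\mu,\nu)=\tmr{\cA}{u_\lambda,v_\mu,w_\nu}$ to identify the fiber of $\cH$ with the padded Hessenberg column (the paper phrases this last step as showing $h_{sii}=0$ for $s\geq i+2$, which is exactly your middle-equality computation).
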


\begin{proof}
  Let $1 \leq i \leq k-1$ and consider the fiber 
\[
\cH(:,i,i)= [h_{2ii}\;\; h_{2ii} \;\; \cdots \;\; h_{i+1, i i} \;\;
  h_{i+2, i i} \;\; \cdots \;\; h_{k i i}]\tp
\]
Since, from the minimal recursion, 
\[
\tmr[2,3]{\cA}{v_i,w_i} = \sum_{\lambda=1}^{i+1} h_{\lambda ii}
u_\lambda = U_{i+1} H^{u}_{i}(:,i), 
\]
we have, for $i+2 \leq s \leq k$, 
\[
h_{sii} = \tmr{\cA}{u_s,v_i,w_i} =
\tml[1]{u_s\tp}{(\tmr[2,3]{\cA}{v_i,w_i})} =  0. 
\]
Thus  $  h_{i+2, i i}  =\ldots =    h_{k i i}=0.$ Therefore, the
fiber in the left hand side of \eqref{eq:minK-factU} is equivalent to the minimal
recursion for computing $u_{i+1}$. The rest of the proof is analogous.
\end{proof}

If the sequence of vectors is generated according to Equations \eqref{eq:ui}--\eqref{eq:wi}, then a similar (and simpler) proposition will hold. For example we would have 
\[
(\tmr[1,3]{\cA}{U_{k},W_{k}})(i,:,i) = \tml[2]{V_{k}}{\cH}(i,:,i) = V_{k} H^{v}(:,i), \quad i = 1, \dots,k.
\]

\subsection{A Maximal Krylov Recursion}
\label{sec:maximal}

Note that when a new $u_{i+1}$ is generated in the minimal Krylov
procedure, then we use the most recently computed $v_i$ and
$w_i$.  In fact, we might choose any combination of previously
computed $\{v_1,v_{2}, \dots, v_{i}\}$ and $\{w_1,\dots,w_{i}\}$ that have not been used before to generate a $u$-vector.
Let $\ j \leq i$ and $k \leq i$, and consider the
computation of a new $u$-vector, which we may write
\begin{algorithm}
\begin{algorithmic}
  \STATE $h_u=U_i\tp(\tmr[2,3]{\cA}{v_{j},w_{k}})$
  \STATE $h_{* j k} u_{i+1} =
               \tmr[2,3]{\cA}{v_{j},w_{k}} - U_{i} h_u$
\end{algorithmic}
\end{algorithm}

Thus if we are prepared to use all previously computed $v$- and
$w$-vectors, then   we have a much richer combinatorial structure, which we
illustrate in the following diagram. Assume that $u_1$ and $v_1$ are
given.  In the first steps of the maximal Krylov procedure the following
vectors can be generated by combining previous vectors.
\begin{align*}
& \textbf{1:} & \{u_{1}\}\x \{v_{1}\} \quad &\longrightarrow \quad w_{1} \\
& \textbf{2:} &\{v_{1}\} \x \{w_{1}\} \quad &\longrightarrow \quad u_{2} \\
& \textbf{3:} &\{u_{1},u_{2}\} \x \{w_{1}\} \quad &\longrightarrow \quad \{v_{2}\;v_{3}\} \\
& \textbf{4:} & \{u_{1},u_{2}\}\x \{v_{1},v_{2},v_{3}\} \quad &\longrightarrow \quad 
\{(w_{1}),w_{2},w_{3},w_{4},w_{5},w_{6}\}\\
& \textbf{5:} & \{v_{1},v_{2},v_{3}\}\x \{w_{1},w_{2},\dots,w_{6}\} \quad &\longrightarrow \quad 
\{(u_{2}),u_{3},\dots,u_{19}\} \\
& \textbf{6:} & \{u_{1},u_{2},\dots,u_{19}\}\x \{w_{1},w_{2},\dots,w_{6}\} \quad &\longrightarrow \quad 
\{(v_{2}),(v_{3}),v_{4},\dots,v_{115}\}
\end{align*}

Vectors computed at a previous step are within parentheses. Of course, we can
only generate new orthogonal vectors as long as the total number of vectors
is smaller than the dimension of that mode. Further, if at a certain
stage in the procedure we have generated $\a$ and $\b$ vectors in
two modes, then we can generate altogether $\g=\a\b$ vectors in
the third mode (where we do not count the starting vector in that
mode, if there was one).

We will now describe the first three steps in some detail. Two starting
vectors $u_{1}$ and $v_{1}$, in the first and second mode,
respectively. We also assume that $\| u_{1} \| = \| v_{1} \|=1$. The
normalization and orthogonalization coefficients will be stored in a
tensor $\cH$. Its entries are denoted with $h_{ijk} =
\cH(i,j,k)$. Also when subscripts are written on tensor $\cH$, they
will indicate the dimensions of the tensor, e.g. $\cH_{211}$ is a $2
\times 1 \times 1$ tensor. 

\paragraph*{Step (1)} In the first step we generate an new third mode vector by computing 
\begin{equation}
\label{eq:step1}
\tmr[1,2]{\cA}{u_{1},v_{1}} =  h_{111}w_{1} = \tml[3]{w_{1}}{\cH_{111}}, 
\end{equation}
where $h_{111} = \cH_{111}$ is a normalization constant. 
\paragraph{Step (2)} Here  we compute a new first mode vector; 
\begin{align*}
\widehat{u}_2  = \tmr[2,3]{\cA}{v_{1},w_{1}}. 
\end{align*}
The orthogonalization coefficient satisfies
\begin{equation}
  \label{eq:u20}
u_{1}\tp \widehat{u}_2 = u_1\tp (\tmr[2,3]{\cA}{v_{1},w_{1}}) = 
\tmr{\cA}{u_{1},v_{1},w_{1}} = w_1\tp(\tmr[1,2]{\cA}{u_{1},v_{1}}) =
h_{111},   
\end{equation}
 from  \eqref{eq:step1}. After orthogonalization and normalization, 
 \begin{equation}
   \label{eq:u2}
h_{211}u_{2}  = \widehat{u}_2  - h_{111}u_{1},   
 \end{equation}
and rearranging the terms in \eqref{eq:u2}, we  have the following
tensor-Krylov factorization 
\[
\tmr[2,3]{\cA}{v_{1},w_{1}} = \tml[1]{[u_{1},\,u_{2}]}{\cH_{211}},
\qquad \cH_{211} =
\begin{bmatrix}
  h_{111} \\
  h_{211} 
\end{bmatrix}.
\]

\paragraph{Step (3)} In the third step we obtain two second mode
vectors. To get $v_{2}$ we  compute  
\[
\widehat{v}_2 = \tmr[1,3]{\cA}{u_{1},w_{1}}, \qquad 
h_{121}v_{2} = \widehat{v}_2 - h_{111}v_{1};
\]
the orthogonalization coefficient becomes $h_{111}$ using an 
argument analogous to that  in \eqref{eq:u20}. 

 Combining $u_{2}$ with $w_{1}$ will
yield $v_{3}$ as follows;  first we compute
\[
\widehat{v}_3 = \tmr[1,3]{\cA}{u_{2},w_{1}}, 
\]
and orthogonalize 
\[
v_1\tp \widehat{v}_3 = \tmr{\cA}{u_2,v_1,w_1} = 
u_2\tp(\tmr[2,3]{\cA}{v_1,w_1}) = 
u_2\tp \widehat{u}_2 = h_{211}.
\]
We see from \eqref{eq:u2} that $h_{211}$ is already computed.
The second orthogonalization becomes 
\[
v_2\tp \widehat{v}_3 = \tmr{\cA}{u_2,v_2,w_1} =: h_{221}.
\]
Then 
\[
\qquad 
h_{231}v_{3} = \widehat{v}_3 -  h_{211}v_{1} - h_{221}v_{2} 
\]
After a completed third step we have a new tensor-Krylov factorization
\[
\RR^{2 \times m \times 1} \ni \tmr[1,3]{\cA}{[u_{1}\, u_{2}],w_{1}} =
\tml[2]{[v_{1} \,v_{2} 
  \,v_{3}]}{\cH_{231}}, \qquad  
\cH_{231} =
\begin{bmatrix}
  h_{111} & h_{121} & 0 \\
  h_{211} &   h_{221} &   h_{231} 
\end{bmatrix}.
\]
Note that the orthogonalization coefficients are given by 
\[
h_{\lambda\mu\nu} = \tmr{\cA}{u_\lambda,v_\mu,w_{\nu}}.
\]

\begin{algorithm}[ht!]
\caption{Maximal Krylov recursion}
\label{alg:max-krylov}
\begin{algorithmic}
\STATE $u_{1},v_{1}$ given starting vectors of length one
\STATE $h_{111} w_1 = \tmr[1,2]{\cA}{u_1,v_1}$
\STATE $\a=\b=\g=1$, $U_{\a} = u_{1}$, $V_{\b} = v_{1}$ and $W_{\g} = w_{1}$
\WHILE {$\a \leq \a_{\mathrm{max}}$ and $\b \leq
  \b_{\mathrm{max}}$ and $\g  \leq \g_{\mathrm{max}}$  }
\STATE \%-------------------- \emph{$u$-loop } --------------------\%
\STATE $U_{\a} = [u_{1},\dots,u_{\a}]$, $U = [\;]$,  $V_{\b} = [v_{1},\dots,v_{\b}]$,  $W_{\g} = [w_{1},\dots,w_{\g}]$, $i = 1$
\FORALL{$(\bar{\b},\bar{\g})$ such that $\bar{\b} \leq \b$ and
  $\bar{\g}  \leq \g$ }
\IF{the pair $(\bar{\b},\bar{\g})$ has not been used before}
\STATE $h_{\a} = \cH(1:\a,\bar{\b},\bar{\g})$
\STATE $h_{i} =\tmr{\cA}{U,v_{\bar{\b}},w_{\bar{\g}}}$
\STATE $h_{\a+i,\bar{\b}\bar{\g}} u_{\a+i} =
               \tmr[2,3]{\cA}{v_{\bar{\mu}},w_{\bar{\lambda}}} -
               U_{\a} h_a - U h_{i}$
\STATE $\cH(\a+1:\a+i,\bar{\b},\bar{\g}) = [h_{i}\tp \; \;h_{\a+i,\bar{\b} \bar{\g}}]\tp$
\STATE  $U = [U\, u_{\a + i}]$, $i=i+1$
\ENDIF
\ENDFOR
\STATE $U_{\b \g +1} = [U_{\a}\, U]$, $\a = \b \g +1$
\STATE \%-------------------- \emph{$v$-loop } --------------------\%
\STATE $U_{\a} = [u_{1},\dots,u_{\a}]$, $V_{\b} = [v_{1},\dots,v_{\b}]$, $V = [\;]$,  $W_{\g} = [w_{1},\dots,w_{\g}]$, $j = 1$
\FORALL{$(\bar{\a},\bar{\g})$ such that $\bar{\a} \leq \a$ and
  $\bar{\g}  \leq \g$ }
\IF{the pair $(\bar{\a},\bar{\g})$ has not been used before}
\STATE $h_{\b} = \cH(\bar{\a},1:\b,\bar{\g})$
\STATE $h_{j} =\tmr{\cA}{u_{\bar{\a}},V,w_{\bar{\g}}}$
\STATE $h_{\bar{\a},\b+j,\bar{\g}} v_{\b+j} =
               \tmr[1,3]{\cA}{u_{\bar{\a}},w_{\bar{\g}}} -
               V_{\b} h_\b - V h_{j}$
\STATE $\cH(\bar{\a},\b+1:\b+j,\bar{\g}) = [h_{j}\tp \; \;h_{\bar{\a},\b+j ,\bar{\g}}]\tp$
\STATE  $V = [V\, v_{\b + j}]$, $j=j+1$
\ENDIF
\ENDFOR
\STATE $V_{\a \g +1} = [V_{\b}\, V]$, $\b = \a \g +1$
\STATE \%-------------------- \emph{$w$-loop } --------------------\%
\STATE $U_{\a} = [u_{1},\dots,u_{\a}]$,  $V_{\b} = [v_{1},\dots,v_{\b}]$,  $W_{\g} = [w_{1},\dots,w_{\g}]$, $W = [\;]$, $k = 1$
\FORALL{$(\bar{\a},\bar{\b})$ such that $\bar{\a} \leq \a$ and
  $\bar{\b}  \leq \b$ }
\IF{the pair $(\bar{\a},\bar{\b})$ has not been used before}
\STATE $h_{\g} = \cH(\bar{\a},\bar{\b},1:\g)$
\STATE $h_{k} =\tmr{\cA}{u_{\bar{\a}},v_{\bar{\b}},W}$
\STATE $h_{\bar{\a}\bar{\b}, \g+k} w_{\g+k} =
               \tmr[1,2]{\cA}{u_{\bar{\a}},v_{\bar{\b}}} -
               W_{\g} h_\g - W h_{k}$
\STATE $\cH(\bar{\a},\bar{\b},\g+1:\g+k) = [h_{k}\tp \; \;h_{\bar{\a}\bar{\b}, \g+k}]\tp$
\STATE  $W = [W\, w_{\g + k}]$, $k=k+1$
\ENDIF
\ENDFOR
\STATE $W_{\a \b} = [W_{\g}\, W]$, $\g = \a \b $

\ENDWHILE
\end{algorithmic}
\end{algorithm} 
This maximal procedure is presented in Algorithm \ref{alg:max-krylov}.
The algorithm has three main loops, and it is maximal in the sense
that in each such loop we generate as many new vectors as can be
done, before proceeding to the next main loop. 
Consider the $u$-loop (the other loops are analogous). 
The vector $h_\a$ is a mode-1 vector\footnote{We here
refer to the identification \eqref{eq:ident-13}.} and contains orthogonalization coefficients with respect to $u$-vectors computed at previous steps. These coefficients are values of the tensor $\cH$. The vector $h_{i}$ on the other hand contains orthogonalization coefficients with respect to $u$-vectors that are computed within the current step.  
Its dimension is equal to the current number of vectors in $U$. The
coefficients $h_{i}$ together with the normalization constant
$h_{\a+1,\bar{\b},\bar{\g}}$ of the newly generated vector $u_{\a+i}$
are appended at the appropriate positions of the tensor
$\cH$. Specifically the coefficients for the $u$-vector obtained using
$v_{\bar{\b}}$ and $w_{\bar{\g}}$ are stored as first mode fiber,
i.e. $\cH(:,\bar{\b},\bar{\g}) = [h_{\a}\tp \;\; h_{i}\tp \;\;
h_{\a+i,\bar{\b},\bar{\g}}]\tp$. 
Since the number of vectors in $U$ are increasing for every new $u$-vector the dimension of 
$[h_{\a}\tp \;\; h_{i}\tp \;\; h_{\a+i,\bar{\b},\bar{\g}}]\tp$ and thus the dimension of $\cH$ along the first mode increases by one as well. The other mode-1 fibers are filled out with a zero at the bottom. Continuing with the $v$-loop, the dimension of the coefficient tensor $\cH$ increases in the second mode.

It is clear that $\cH$ has a zero-nonzero structure that
resembles that of a Hessenberg matrix. 
If we break the recursion after any complete outer \textbf{for all}-statement,
we can form a tensor-Krylov factorization.

\begin{theorem}[Tensor Krylov factorizations]\label{theo:tensor-fact}
Let a tensor $\cA \in \RR^{l \times m \times n}$ and two starting
vectors $u_{1}$ and $v_{1}$ be given. Assume that we have generated matrices with orthonormal columns using the maximal Krylov procedure of Algorithm 
\ref{alg:max-krylov}
, and a tensor $\cH$ of orthonormalization coefficients. Assume that after a complete $u$-loop
the matrices $U_\a$, $V_\b$, and  $W_\g$, and the tensor  $\cH_{\a \b \g}
\in \RR^{\a \times \b \times \g}$, have been generated,  where
$\a \leq l$, $\b \leq m$,  and $\g \leq n$. Then
  \begin{equation}\label{eq:tens-fact-U}
    \tmr[2,3]{\cA}{V_\b,W_\g} = \tml[1]{U_\a}{\cH_{\a \b \g}}.
  \end{equation}
Further, assume that after the following complete $v$-loop
we have orthonormal matrices $U_{\a}$, $V_{\bar \b}$, $W_{\g}$, and the 
tensor $\cH_{\a \bar \b \g} \in \RR^{\a \times \bar\b \times \g}$ where $\bar \b = \a \g +1  > \b$. Then
\begin{equation}\label{eq:tens-fact-V}
      \tmr[1,3]{\cA}{U_\a,W_\g} = \tml[2]{V_{\bar \b}}{\cH_{\a \bar \b \g}}.
\end{equation}
Similarly, after the following complete
$w$-loop, we will have orthonormal matrices $U_{\a}$, $V_{\bar \b}$, $W_{\bar \g}$
and the tensor $\cH_{\a \bar \b \bar \g} \in \RR^{\a \times \bar\b \times \bar \g}$ where $\bar \g = \a \bar \b   > \g$. Then%
\begin{equation}\label{eq:tens-fact-W}
      \tmr[1,2]{\cA}{U_\a,V_{\bar\b}} = \tml[3]{W_{\bar \g}}{\cH_{\a \bar \b \bar \g}}.
\end{equation}
It also holds that $\cH_{\a \b\g} = \cH_{\a \bar \b \g} (1:\a, 1:\b, 1:\g)$ and $\cH_{\a \bar \b \g} = \cH_{\a \bar \b \bar \g} (1:\a, 1:\bar \b, 1:\g)$, i.e. all orthonormalization coefficients from the $u$-, $v$- and $w$-loops are stored in a single and common tensor $\cH$.
\end{theorem}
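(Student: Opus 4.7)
The plan begins by establishing a fundamental identity: every stored entry of the coefficient tensor satisfies $h_{\lambda\mu\nu} = \tmr{\cA}{u_\lambda,v_\mu,w_\nu}$. Each value produced in Algorithm~\ref{alg:max-krylov} is either an orthogonalization coefficient of the form $u_\lambda\tp\, \tmr[2,3]{\cA}{v_{\bar\b},w_{\bar\g}}$ in a $u$-loop (and analogously in the other two loops) or a normalization coefficient $u_{\a+i}\tp \widehat u$ with $\widehat u = \tmr[2,3]{\cA}{v_{\bar\b},w_{\bar\g}}$; in either case the value collapses to $\tmr{\cA}{u_\lambda,v_\mu,w_\nu}$ via the move-over rule $u_\lambda\tp\, \tmr[2,3]{\cA}{v_\mu,w_\nu} = \tmr{\cA}{u_\lambda,v_\mu,w_\nu}$. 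Positions not explicitly written---in particular, when $\cH$ is enlarged along a mode but an older fiber is not revisited---must still satisfy the identity; I would argue that the relevant fully contracted product vanishes by orthogonality of the later vector to the subspace in which $\tmr[2,3]{\cA}{v_\mu,w_\nu}$ has already been absorbed, so the implicit zero is the correct entry.

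The second ingredient is an in-span property: after a complete $u$-loop, $\tmr[2,3]{\cA}{v_j,w_k} \in \spn(U_\a)$ for every $j \leq \b$ and $k \leq \g$. The $u$-loop iterates through every admissible pair $(\bar\b,\bar\g)$ not yet handled in an earlier $u$-loop, and each inner step writes
\[
\tmr[2,3]{\cA}{v_{\bar\b},w_{\bar\g}} = U_\a h_a + U h_i + h_{\a+i,\bar\b\bar\g}\, u_{\a+i},
\]
placing the left-hand side in the span of the current $u$-matrix, hence in $\spn(U_\a)$ after the loop finishes; pairs already processed in previous $u$-loops were absorbed into a subspace strictly contained in the current $\spn(U_\a)$. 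Combining the two ingredients, every mode-$1$ fiber of $\tmr[2,3]{\cA}{V_\b,W_\g}$ equals its orthogonal projection onto $\spn(U_\a)$, so
\[
\tmr[2,3]{\cA}{v_j,w_k} = \sum_{a=1}^{\a} \tmr{\cA}{u_a,v_j,w_k}\, u_a = U_\a\, \cH(:,j,k),
\]
which assembled over $(j,k)$ yields \eqref{eq:tens-fact-U}. The proofs of \eqref{eq:tens-fact-V} and \eqref{eq:tens-fact-W} proceed by the symmetric arguments for the $v$- and $w$-loops, using mode-$2$ and mode-$3$ projections respectively.

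The compatibility statements $\cH_{\a\b\g} = \cH_{\a\bar\b\g}(1{:}\a,1{:}\b,1{:}\g)$ and $\cH_{\a\bar\b\g} = \cH_{\a\bar\b\bar\g}(1{:}\a,1{:}\bar\b,1{:}\g)$ then follow by direct inspection of the algorithm: no loop overwrites coefficients produced earlier, and each loop only appends new slices along one mode. I expect the principal technical obstacle to be the bookkeeping of the hybrid case in which a pair $(\bar\b,\bar\g)$ was handled in a previous $u$-loop while the current $U_\a$ contains strictly more $u$-vectors than existed at that earlier time, so that the trailing entries of the fiber $\cH(:,\bar\b,\bar\g)$ are filled only implicitly by zeros; to justify this it suffices to observe that $u_\lambda \perp \tmr[2,3]{\cA}{v_{\bar\b},w_{\bar\g}}$ whenever $u_\lambda$ is generated strictly after $\tmr[2,3]{\cA}{v_{\bar\b},w_{\bar\g}}$ was already absorbed into the earlier $U$-subspace, which is precisely the orthogonality provided by the modified Gram--Schmidt step inside the loop.
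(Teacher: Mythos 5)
Your route is essentially the paper's: identify the mode-1 fibers of $\tmr[2,3]{\cA}{V_\b,W_\g}$, identify the entries of the corresponding fibers of $\cH$ with the contracted products $\tmr{\cA}{u_\lambda,v_\mu,w_\nu}$ via the move-over rule, and read the factorization off the Gram--Schmidt relation defining each new $u$-vector; the nesting statements are, as you say, immediate because the algorithm only appends and never overwrites. So in substance you and the paper agree, and your explicit handling of pairs processed in earlier $u$-loops is if anything a bit more careful than the paper's.

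The one incomplete point is your justification of the implicit zeros. The orthogonality you invoke ($u_\lambda \perp \tmr[2,3]{\cA}{v_{\bar\b},w_{\bar\g}}$ when $u_\lambda$ is created after that fiber has been absorbed) only covers the \emph{trailing} entries of a fiber $\cH(:,\bar\b,\bar\g)$. Your projection step, however, also needs the \emph{leading} entries to be correct, i.e. $\cH(s,\bar\b,\bar\g)=\tmr{\cA}{u_s,v_{\bar\b},w_{\bar\g}}$ for $u$-vectors $u_s$ that already existed when the pair $(\bar\b,\bar\g)$ is processed; these are the values read as $h_\a$ in Algorithm \ref{alg:max-krylov}, and some of them are themselves unwritten implicit zeros (for instance when the pair $(s,\bar\b)$ was consumed in an earlier $w$-loop, before $w_{\bar\g}$ existed). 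Their vanishing is not a mode-1 absorption fact; it follows either from the analogous absorption in the other modes (e.g. $\tmr[1,2]{\cA}{u_s,v_{\bar\b}}$ lies in the span of $w$-vectors created before $w_{\bar\g}$, which is orthogonal to $w_{\bar\g}$), or more simply from the theorem's standing assumption of orthonormal columns: orthogonality of the residual $h_{\a+i,\bar\b\bar\g}\,u_{\a+i}$ to each earlier $u_s$ forces $\cH(s,\bar\b,\bar\g)=u_s\tp\bigl(\tmr[2,3]{\cA}{v_{\bar\b},w_{\bar\g}}\bigr)$. Alternatively, you can sidestep the leading entries altogether, as the paper does: the algorithm's defining equation already expresses $\tmr[2,3]{\cA}{v_{\bar\b},w_{\bar\g}}$ as $U$ times the (read plus newly written) fiber of $\cH$, and combined with the never-overwritten property and your mode-1 argument for the zero tail this yields \eqref{eq:tens-fact-U} without having to certify every coefficient as a contracted product.
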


\begin{proof}
  We prove that \eqref{eq:tens-fact-U} holds; the other two equations
  are analogous. Using the definition of matrix-tensor multiplication
  we see that $\tmr[2,3]{\cA}{V_\b,W_\g}$ is a tensor in $\RR^{l \times
    \b \times \g}$, where the first mode fiber at position
  $(j,k)$ with $j \leq \b$ and $ k \leq \g$ is given by $
  \widehat{u}_{\lambda} = 
  \tmr[2,3]{\cA}{v_{j},w_{k}}$ with $\lambda = (j-1)\g + k + 1$.

On the right hand side the corresponding first mode fiber $\cH(:,j,k)$
is equal to  
\[
\begin{bmatrix}
  h_{1jk}\\
  h_{2jk}\\
\vdots \\
  h_{\lambda-1 jk}\\
  h_{\lambda jk}\\
   \mathbf{0}
\end{bmatrix}
= 
\begin{bmatrix}
  \tmr{\cA}{u_1,v_{j},w_{k}} \\
  \tmr{\cA}{u_2,v_{j},w_{k}} \\
\vdots \\
  \tmr{\cA}{u_{\lambda-1},v_{j},w_{k}} \\
   h_{\lambda j k } \\
   \mathbf{0}
\end{bmatrix}.
\]
Thus we have 
\[
\widehat{u}_\lambda = \tmr[2,3]{\cA}{v_{j},w_{k}} = 
\sum_{i=1}^\lambda h_{ijk} u_i,
\]
which is the equation for computing $u_\lambda$ in the algorithm. 
\end{proof}

Let $U_j$ and $V_k$ be two matrices with orthonormal columns that have
been generated by any tensor Krylov method (i.e., not necessarily a
maximal one) with tensor $\cA$.  Assume that we then generate a
sequence of $m=jk$ vectors $(w_1, w_2,\ldots,w_{m})$ as in the $w$-loop of
the maximal method. From the proof of Theorem \ref{theo:tensor-fact}
we see that we have a tensor-Krylov factorization of the type
\eqref{eq:tens-fact-W},
\begin{equation}
\label{eq:k-fac}
      \tmr[1,2]{\cA}{U_j,V_k} = \tml[3]{W_m}{\cH_{jkm}}.
\end{equation}

It is clear that the dimensions of the $U$, $V$ and $W$ in the maximal
Krylov recursion become very large even after only 6--7 steps of the
procedure. It is not clear how preserving a tensor-Krylov
factorization can be utilized in practical implementation
applications. For the matrix case the theory of Krylov factorizations
is very important in enabling efficient implementation for various
algorithms.  However, the maximal Krylov recursion suggests a way for
an efficient algorithmic implementation. Consider the multilinear
approximation problem of an $l \x m \x n$ tensor $\cA$
\[
\min_{U,V,W,\cS} \| \cA - \tml{U,V,W}{\cS} \|
\]
which is equivalent to 
\[
\max_{U,V,W} \| \tmr{\cA}{U,V,W} \|, \quad U\tp U = I, \; V\tp V = I,\; W\tp W = I
\]
and $U,V,W$ are $l \x p$, $m \x q$ and $n\x r $ orthonormal matrices, respectively. 
Assume that we have generated $V_{\b}=[v_{1} \; \dots \;v_{\b}]$ and
$W_{\g}=[w_{1} \; \dots \;w_{\g}]$ using the maximal Krylov recurrence but
the number of combinations of $v$- and $w$-vectors exceeds the number
of $u$-vectors that are desired, i.e. $\b \g > p$. A natural thing to
do in this case is to compute the product
$\mathcal{U}_{\b\g}=\tmr[2,3]{\cA}{V_{\b},W_{\g}}$ and compute the $p$
dimensional dominant subspace of its mode one matricization
$U_{\b\g}^{(1)}$. Similarly for the other modes. With this
modification we no longer have a tensor-Krylov factorization, however
we can manage the blow up in the size of the dimensions for $U,V,W$
and obtain efficient algorithms. 
Although natural, this approach may still be impractical. For example if $l
=10^{4}$, and $\beta = \gamma = 100$, then $U_{\beta \gamma}^{(1)}$ will be
a large and dense $10^{4} \x 10^{4}$ matrix. If we are interested in an
approximation with $p = 100$ (rank of the first mode in the approximation)
an alternative to compute the dominant 100 dimensional subspace of
$U_{\beta \gamma}^{(1)}$ would be to take dominant (in some sense)
$\bar{V}_{10}$ and $\bar{W}_{10}$ subspaces of $V_{\beta}$ and
$W_{\gamma}$, respectively, and  compute $\bar{\mathcal{U}}_{100} =
\tmr[2,3]{\cA}{\bar{V}_{10},\bar{W}_{10}}$. Then $U_{100}$ is obtained
from the columns of $\bar{\mathcal{U}}_{100}$. 

\subsection{Optimized  Minimal  Krylov Recursion}
\label{sec:kryl-opt}

In some applications it may be a disadvantage that the maximal Krylov
method generates so many vectors in each mode. In addition, when applied as
described in Section \ref{sec:maximal} it generates different numbers of
vectors in the different modes. Therefore it is natural to ask whether one
can modify the minimal Krylov recursion so that it uses ``optimal'' vectors
in two modes for the generation of a vector in the third mode. Such
procedures have recently been suggested in \cite{gos10}.
We will describe this approach in terms of the recursion  of a vector in
the  mode 3. The corresponding computations in  modes 1 and 2 are
analogous. 

Assume that we have computed $i$  vectors in the first two modes, for
instance, and that we are about to compute $w_i$. Further, assume that we
will use  linear combinations of the vectors from modes 1 and 2, i.e. we
compute
\[
 \widehat{w}=\tmr[1,2]{\cA}{U_i \theta,V_i \eta},
\]
where $\theta, \eta \in \RR^i$ are yet to be specified. We want the new
vector to enlarge the ``$W$'' subspace as much as possible. This is the
same as requiring that $w_i$ be as large (in norm) as possible under the
constraint that it is orthogonal to the previous mode-3 vectors. Thus we
want to solve
\begin{align}
\max_{\theta,\eta}  \| \widehat{w} \|, \; \text{ where } \;  &\widehat{w} =
\tmr[1,2]{\cA}{U_i \theta,V_i \eta}, \label{eq:Copt}\\ 
  &\widehat{w} \perp W_{i-1}, \quad  \|
\theta \| = \| \eta \| = 1, \quad \theta,\eta \in \RR^{i}. \nonumber
\end{align}
The solution of  this problem is obtained by computing the best
rank-$(1,1,1)$ approximation $\tml{\theta,\eta,\omega}{\cS}$ 
of the tensor 
\begin{equation}
  \label{eq:Cw}
\cC_{w} = \tmr{A}{U_{i},V_{i},I-W_{i-1}W_{i-1}\tp}. 
\end{equation}
A suboptimal solution can be obtained from  the  HOSVD of $\cC_{w}$.


Recall the assumption  that $\cA \in \RR^{l \times m \times n}$ is large and
sparse.  Clearly the optimization approach has the drawback that the tensor
$\cC_w$ is generally a dense tensor of dimension $i \times i \times n$, and
the computation of the best rank-$(1,1,1)$ approximation or the HOSVD of
that tensor can be quite time-consuming.  Of course, in an application,
where it is essential to have a good approximation of the tensor with as
small dimensions of the subspaces as possible, it may be worth the extra
computation needed for the optimization. However, we can avoid handling
large, dense tensors by modifying  the optimized recursion, so that an
approximation of the solution of the maximization problem \eqref{eq:Copt}
is computed using 
$t$ steps of the minimal Krylov recursion on the tensor $\cC_w$, for small
$t$.

Assume that we have computed a rank-$(t,t,t)$ approximation of $\cC_w$, 
\[
\cC_w \approx \tml{\Theta,H,\Omega}{\cS_w},
\]
for some small value of $t$, using the minimal Krylov method.  By computing
the best rank-$(1,1,1)$ (or HOSVD) approximation of the small tensor
$\cS_w \in \RR^{t \times t \times t}$, we obtain an approximation of the solution of \eqref{eq:Copt}. It
remains to demonstrate that we can apply the minimal Krylov recursion to
$\cC_w$ without forming that tensor  explicitly. Consider the computation
of a vector $\omega$ in the third mode, given the vectors $\theta$, and
$\eta$:  
\begin{align}
 \widehat{ \omega} &= \tmr[1,2]{\cC_w}{\theta,\eta} = 
\tmr[1,2]{\left(\tmr{A}{U_{i},V_{i},I-W_{i-1}W_{i-1}\tp}\right)}{\theta,\eta} \label{eq:opt-w} \\
&= \tmr[3]{\left(\tmr[1,2]{\cA}{U_i \theta, V_i \eta}\right)}{I-W_i W_i\tp} = 
(I-W_i W_i\tp) \tilde{\omega}. \notag
\end{align}
Note that the last matrix-vector multiplication is equivalent to the
Gram-Schmidt orthogonalization in the minimal Krylov algorithm.  Thus,
we have only a sparse tensor-vector-vector operation, and a few 
matrix-vector multiplications, and similarly for the computation of
$\widehat{\theta}$ and $\widehat{\eta}$. 

It is crucial for the performance of this outer-inner Krylov procedure that
a good enough approximation of the solution of \eqref{eq:Copt} is obtained
for small $t$, e.g. $t$ equal to 2  or 3. We will see in our numerical examples
that it gives almost as good results as the implementation of the full
optimization procedure.  

\subsection{``Small'' Mode}
\label{sec:kryl-small}

In information science applications it often happens that one of the
tensor modes has much smaller dimension than the others. For
illustration assume that the first mode is small, i.e. $l \ll
\min(m,n)$.  Then in the Krylov variants described so far, after $l$
steps the algorithm has produced a full basis in that mode, and no
more need be generated.  Then the question arises which $u$-vector to
choose, when new basis vectors are generated in the other modes. Two
obvious alternatives are to use the vectors $u_1, \ldots, u_l$ in a
cyclical way, or to take a random linear combination.  
One may also apply the optimization idea in that mode, i.e. in the
computation of $w_{i}$ perform the maximization
\[
\max_{\theta}  \| \widehat{w} \|, \; \text{ where } \;  \widehat{w} =
\tmr[1,2]{\cA}{U_i \theta,v_i},  \quad 
  \widehat{w} \perp W_{i-1}, \quad  \|
\theta \| =  1, \quad \theta \in \RR^{i}. \nonumber
\]
The problem can be solved by computing a best rank-1 approximation of
the matrix 
\begin{equation*}
C_{w} = \tmr{A}{U_{i},v_{i},I-W_{i-1}W_{i-1}\tp}. 
\end{equation*}
As before, this is generally a dense matrix with one large mode. A
rank one approximation can again be computed, without forming the
dense matrix explicitly, using a Krylov method (here the Arnoldi
method).

\subsection{Krylov Subspaces for Contracted Tensor Products}
\label{sec:kryl-contr}

Recall from Section \ref{sec:GKB} that the Golub-Kahan bidiagonalization procedure 
generated matrices $U_k,V_k$, which are orthonormal basis vectors for the Krylov
subspaces of $A A\tp$ and $A\tp A$, respectively. In tensor notation
those products may be written as
\[
\ctp[-1]{A,A} = A A\tp, \qquad \qquad \ctp[-2]{A,A} = A\tp A.
\]
For a third order tensor $\cA \in \RR^{l \x m \x n }$, and starting
vectors $u \in \RR^l, v \in \RR^m, w \in \RR^n$  we may consider the
matrix Krylov subspaces
\begin{align*}
& \cK_p(\ctp[-1]{\cA,\cA},u), & \ctp[-1]{\cA,\cA} & = A^{(1)} (
A^{(1)})\tp \in \RR^{l \x l},\\
& \cK_q(\ctp[-2]{\cA,\cA},v), & \ctp[-2]{\cA,\cA} & = A^{(2)} (
A^{(2)})\tp \in \RR^{m \x m}, \\
& \cK_r(\ctp[-3]{\cA,\cA},w), & \ctp[-3]{\cA,\cA} & = A^{(3)} (
A^{(3)})\tp \in \RR^{n \x n}.
\end{align*}
The expressions to the right in each equation are matricized tensors. It suffices for our discussion to know that for an $l \x m \x n$ tensor $\cA$ one can associate three matrices $A^{(1)} \in \RR^{l \x mn}$, $A^{(2)} \in \RR^{m \x ln}$ and $A^{(3)} \in \RR^{n \x lm}$. For details the interested reader may consider \cite{elden09,bader07,latha00,eccv:02}. In this case we reduce a third order tensor to three different
(symmetric) matrices, for  which we compute the usual matrix subspaces through the
Lanczos recurrence.  This can be done without explicitly computing the products $\ctp[-i]{\cA,\cA}$, thus taking advantage of sparsity. To illustrate this consider the matrix times vector operation $A^{(1)}(A^{(1)})\tp u$, which can be written 
\begin{equation}
\label{eq:ctpVec}
[A_{1} \; \dots \;A_{n}][A_{1} \; \dots \;A_{n}]\tp u = \sum_{i=1}^{n} A_{i} A_{i}\tp u,
\end{equation}
where $A_{i} = \cA(:,:,i)$ is the i'th frontal slice of $\cA$. 

The result of the Lanczos process separately on the three contracted tensor products is three sets of orthonormal basis vectors for each of the
modes of the tensor, collected in $U_p, V_q, W_r$, say. A low-rank
approximation of the tensor can then be obtained using Lemma
\ref{lem:LS}.

It is straightforward to show that if $\cA = \tml{X,Y,Z}{\cC}$ with $\rank(\cA) = (p,q,r)$, then the contracted tensor products  
\begin{align}
\ctp[-1]{\cA,\cA} & = A^{(1)} (A^{(1)})\tp = X C^{(1)}(Y \otimes Z)\tp( Y \otimes Z) (C^{(1)})\tp X\tp, \\
\ctp[-2]{\cA,\cA} & = A^{(2)} (A^{(2)})\tp= Y C^{(2)}(X \otimes Z)\tp (X \otimes Z) (C^{(2)})\tp Y\tp, \\
\ctp[-3]{\cA,\cA} & = A^{(3)} (A^{(3)})\tp = Z C^{(3)}(X \otimes Y)(X \otimes Y) (C^{(3)})\tp Z\tp,
\end{align} 
are matrices with ranks $p$, $q$ and $r$, respectively. Then it is clear that the separate Lanczos recurrences will generate matrices $U,V,W$ that span the same subspaces as $X,Y,Z$ in $p$, $q$ and $r$ iterations, respectively.  

\paragraph{Remark}  Computing $p$ (or $q$ or $r$) dominant
eigenvectors of the symmetric positive semidefinite matrices
$\ctp[-1]{\cA,\cA},\ctp[-2]{\cA,\cA}$, $\ctp[-3]{\cA,\cA}$,
respectively, is equivalent to computing the truncated  HOSVD of $\cA$. We
will show the calculations for the first mode. Using the HOSVD $\cA =
\tml{U,V,W}{\cS}$, where now $U$, $V$, and $W$ are orthogonal matrices and the
core  $\cS$ is all-orthogonal \cite{latha00}, we have  
\[
\ctp[-1]{\cA,\cA} = U S^{(1)}(V \otimes W)\tp( V \otimes W) (S^
{(1)})\tp U\tp = U \bar{S} U\tp,
\]
where $\bar{S} = S^{(1)} (S^{(1)})\tp = \diag(\s_{1}^{2},\s_{2}^{2},\dots,\s_{l}^{2})$ with $\sigma_{i}^{2} \geq \sigma_{i+1}^{2}$ and $\s_{i}$ are first mode multilinear singular values of $\cA$.

\subsection{ Complexity}
In this subsection we will discuss the amount of computations
associated to the different methods.  Assuming that the tensor is
large and sparse it is likely  that,
for small values of $k$ (compared to $l$, $m$, and $n$), the dominating work in computing a rank-$(k,k,k)$
approximation is due to tensor-vector-vector multiplications. 
\paragraph{Minimal Krylov recursion}
Considering Equation \eqref{eq:low-rank}, it is clear that computing
the $k\x k \x k$ core tensor $\cH$ is necessary to have a low rank
approximation of $\cA$.  From the proof of Proposition
\ref{prop:minK-fact} we see that $\cH$ has a certain Hessenberg
structure along and close to its ``two-mode diagonals''. However, away
from the ``diagonals'' there will be no systematic structure.
We
can estimate that the total number of tensor-vector-vector
multiplications for computing the $k \times k \times k$ tensor $\cH$
is $k^2$. The computation of $\cH$ can be split as
\[
\cH = \tmr{\cA}{U_{k},V_{k},W_{k}} = \tmr[3]{\cA_{uv} }{W_{k}}, \quad
\text{ where } \quad  
\cA_{uv} = \tmr[1,2]{\cA}{U_{k},V_{k}}. 
\] 
There are $k^{2}$ tensor-vector-vector products for computing the $k\x
k \x n$ tensor $\cA_{uv}$. The complexity of the following computation
$\tmr[3]{\cA_{uv}}{W_{k}}$ is $O(k^{3}n)$, i.e. about $k^{3}$
vector-vector inner products.   

Several of the elements of the core tensor are available from the
generation of the Krylov vectors. Naturally they should be saved to
avoid unnecessary work. Therefore we need not include the $3k$
tensor-vector-vector multiplications from the recursion in the
complexity. 

\paragraph{Maximal Krylov Recursion}
There are several options in the use of the maximal recursion for
computing a rank-$(k,k,k)$ approximation. One may apply the method
until all subspaces have dimension larger than $k$. In view of the
combinatorial complexity of the method the number of
tensor-vector-vector multiplications can then be much higher than in
the minimal Krylov recursion. Alternatively, as soon as one of the
subspaces reaches dimension $k$, one may stop the maximal recursion
and generate only the remaining vectors in the other two modes, so
that the final rank becomes $(k,k,k)$. That variant has about the same
complexity as the minimal Krylov recursion.

\paragraph{Optimized Krylov Recursion}
The optimized recursion can be implemented in different ways. 
 In Section \ref{sec:kryl-opt} we described a variant based on ``inner Krylov
 steps''. Assuming that we perform $t$ 
inner Krylov steps, finding  the (almost) optimal
  $\wh{w}$  \eqref{eq:opt-w} requires $3t$ tensor-vector-vector
 multiplications. Since the optimization is done in $k$ outer
 Krylov steps in three modes we perform  $9kt$ such multiplications. 
The total complexity becomes $k^2 + 9 k t$.  In \cite{gos10}
 another variant is described where the optimization is done on the core
 tensor. 

\paragraph{``Small'' Modes}
Assume that the tensor has one small mode  and that
a random or fixed combination of vectors is chosen in this mode when
new vectors are generated in the other modes. Then the complexity
becomes $k^2+2k$. 

\paragraph{Krylov Subspaces for Contracted Tensor Products}
In each step of the Krylov method a vector is
multiplied  by a contracted tensor product. This can be implemented using 
\eqref{eq:ctpVec}. If we assume that each such operation has the same
complexity as two tensor-vector-vector
multiplications,  then the complexity becomes 
$k^2 + 6 k$, where the second order term is for computing the core
tensor.

The complexities for four of the methods are summarized in Table~\ref{tab:complexity}.
\begin{table}[htdp!]
\begin{center}
\begin{tabular}{lc}
\hline \hline 
Method & Complexity \\
\hline \hline 
Minimal Krylov  & $k^2$ \\
Optimized minimal Krylov & $k^2 + 9 k t$\\
``Small'' mode & $k^2+2k$ \\
Contracted tensor products & $k^2 + 6 k$  \\
\hline \hline 
\end{tabular}
\end{center}
\caption{Computational complexity   (tensor-vector-vector
  multiplications) for the computation of a rank-$(k,k,k)$
  approximation with  different methods. In the optimized Krylov
  recursion $t$ inner Krylov steps are made.} 
\label{tab:complexity}
\end{table}%

\section{ Numerical Examples}
\label{sec:numerical}
The purpose of the examples in this section is to make a preliminary
investigation of the usefulness of the concepts proposed. We will
generate matrices $U,V,W$ using the various Krylov procedures and, in
some examples for
comparison,  the truncated HOSVD. Given a tensor $\cA$ and
matrices $U,V,W$ the approximating tensor $\tilde{\cA}$ has the form  
\begin{equation}
\label{eq:app}
\tilde{\cA} = \tml{U,V,W}{\cC}, \quad \text{ where } \quad \cC = \tmr{\cA}{U,V,W}.
\end{equation}
Of course, for large problems computing $\tilde{\cA}$ explicitly (by
multiplying together the matrices and the core $\cC$) will
not be feasible, since that tensor will be dense.  However, it is easy to show that approximation error
is
\[
\| \cA - \tilde{\cA} \|^2 = \| \cA \|^2 - \| \cC \|^2.
\]



For many applications a low rank
approximation is only an intermediate or auxiliary result, see e.g.
\cite{savas07}.  It sometimes holds that the better the approximation
(in norm),  the 
better it will perform in the particular application. But quite often,
especially in information science applications, good performance is
obtained using an approximation with quite high error, see
e.g. \cite{jema01}. Our experiments will focus on how good
approximations are obtained by the proposed methods. How these low
rank approximations are used will depend on the application as well as
on the particular data set.  

For the timing experiments in Sections  \ref{sec:testMin} and \ref{sec:testDigits}
 we used a MacBook laptop with 2.4GHz processor
and 4 GB of main memory. For the experiments on the Netflix data in
Section \ref{sec:testNetflix} we used a 64 bit Linux machine with 32
GB of main memory running Ubuntu. The calculations were performed
using Matlab and the TensorToolbox, which 
 supports computation with sparse tensors 
\cite{bader06b, bader07}.

\subsection{Minimal Krylov Procedures}
\label{sec:testMin}
We first made a set of experiments to confirme that  the result
in Theorem \ref{thm:minKrylov22}  holds for a numerical
implementation, using synthetic data generated with a specified low
rank.

It is not uncommon that tensors originating from signal processing
applications have low multilinear ranks. Computing the HOSVD of such a
tensor $\cA$ can be done by direct application of the SVD on the
different matricizations  $A^{(i)}$ for $i = 1,2,3$. An alternative is
to first compute $U_{p},V_{q},W_{r}$ using the modified minimal Krylov
procedure. Then we have the decomposition $\cA =
\tml{U_{p},V_{q},W_{r}}{\cH}$. To obtain the HOSVD of $\cA$ we compute
the HOSVD of the much smaller\footnote{$\cA$ is a $l \x m \x n$ tensor
  and $\cH$ is a $p \x q \x r$, and usually the multilinear ranks
  $p,q,r$ are much smaller than the dimensions $l,m,n$ of $\cA$.}
tensor $\cH = \tml{\bar{U},\bar{V},\bar{W}}{\cC}$. It follows that the
singular matrices for $\cA$ are given by $U_{p}\bar{U}$,
$V_{q}\bar{V}$ and $W_{r}\bar{W}$. We conducted a few experiments to
compare timings for the two approaches. 
Tensors with three
different dimensions were generated and for each case we used three different low
ranks. The tensor dimensions, their ranks and the computational times
for the respective case are presented in Table \ref{tab:timings}.  
\begin{table}[t!]
\begin{center}
\begin{tabular}{c|cc|cc|cc}
\hline \hline
$\dim(\cA)$\textbackslash$\rank(\cA)$ & \multicolumn{2}{c|}{$(10,10,10)$} & \multicolumn{2}{c}{$(10,15,20)$} & \multicolumn{2}{|c}{$(20,30,40)$} \\
\hline 
Method & (1)&(2) &(1)&(2)&(1)&(2)\\
\hline \hline
$50 \x 70 \x 60$ & 0.087 & 0.165 & 0.113  & 0.162& 0.226& 0.169\\
$100 \x 100 \x 100$ & 0.38 & 2.70 & 0.44 & 2.72&0.91 & 2.71 \\
$150 \x 180 \x 130$ & 1.32 & 11.27 & 1.44 & 11.07 & 3.01 & 11.01 \\
\hline \hline
\end{tabular}
\end{center}
\caption{Timing in seconds for computing the HOSVD of low rank tensors
  using (1)  the
  modified minimal Krylov method and HOSVD of the smaller core $\cH$ and
  (2) truncated HOSVD approximation of $\cA$.} 
\label{tab:timings}
\end{table}%
We see that for the larger problems the computational time
for the HOSVD is 2--8
times longer than for  the modified minimal Krylov procedure with
HOSVD on the core tensor.  Of
course, timings of Matlab codes are unreliable in general, since the
efficiency of execution depends on how much of the algorithm is
user-coded and how much is implemented in Matlab low-level functions
(e.g. LAPACK-based). It should be noted that the tensors in this
experiment  are dense, and much of the HOSVD computations are done in
low-level functions. Therefore, we believe that the timings are
rather realistic.


Next we   let $\cA \in \RR^{50 \x 60 \x 40}$ be a random tensor, and 
computed a rank-$(10,10,10)$ approximation using the minimal Krylov
recursion and a different approximation using truncated HOSVD. Let the optimal cores computed using Lemma
\ref{lem:LS} be  denoted $\cH_{\text{min}}$ and  $\cH_{\text{hosvd}}$,
respectively. We made this calculation for 100
different random tensors and report $(\|\cH_{\text{min}} \| -
\|\cH_{\text{hosvd}} \|)/\|\cH_{\text{hosvd}} \|$ for each
case. Figure \ref{fig:test1}  illustrates the outcome. Clearly, if the
relative difference is larger than 0, then the Krylov method gives a
better approximation. 
\begin{figure}[tbp!]    
\centering
\includegraphics[width=.8\textwidth]{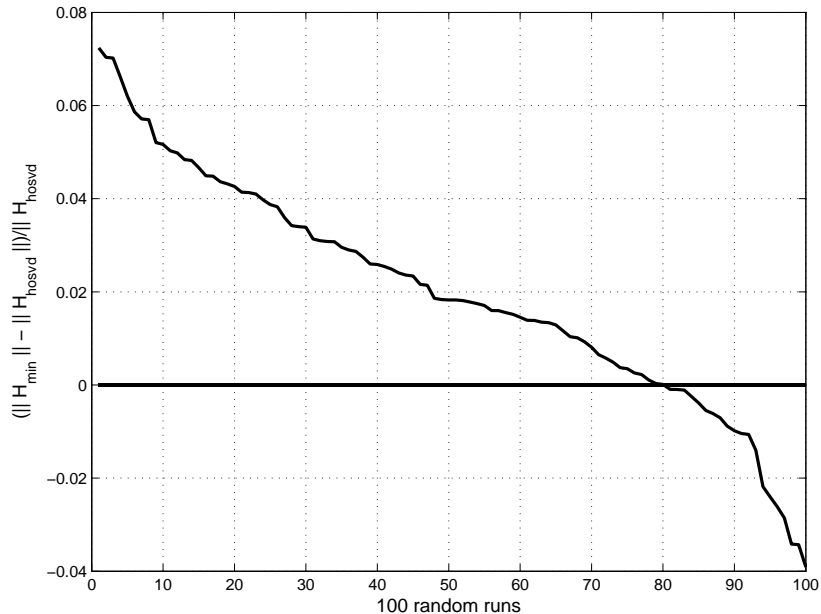}
\caption{Difference between $\|\cA_{\text{min}}
  \|$, approximation obtained with the minimal Krylov method,  and
  $\|\cA_{\text{hosvd}} \|$, approximation obtained by the truncated
  HOSVD of a $50 \x 60 \x 40$ tensor $\cA$. The rank of the
  approximations were $(10,10,10)$.  }
\label{fig:test1}
\end{figure}
In about 80\% of the runs the minimal Krylov method generated better
approximations than the truncated HOSVD, but the difference was quite
small.
      

In the following experiment we compared the performance of different
variants of the optimized minimal Krylov recursion applied to sparse
tensors. We generated  
tensors based on Facebook graphs for different US universities
\cite{tkmp08}. The Caltech graph is represented by a $597 \times 597$
sparse matrix. For each individual there is housing information. Using
this we generated a tensor of dimension $597 \times 597 \times 64$,
with 25646 nonzeros. The purpose was to see how good approximations
the different methods gave as a function of the subspace dimension. 
We compared the minimal Krylov recursion to the following optimized
variants:  

\begin{description}
\item {\textbf{Opt-HOSVD.}} The minimal Krylov recursion with optimization
  based on HOSVD of the core tensor  \eqref{eq:Cw}. This variant is
  very costly and is included only as a benchmark. 

\item {\textbf{Opt-Krylov.}} The minimal Krylov recursion that utilized three inner Krylov steps to obtain approximations to the optimized linear combinations. This is an implementation of the discussion from the second part of Section  \ref{sec:kryl-opt}. 

\item {\textbf{Opt-Alg8.}}  Algorithm 8 in \cite{gos10}\footnote{The
    algorithm involves the approximation of the dominant singular
    vectors of a matrix computed from the core tensor. In \cite{gos10}
  the power method was used for this computation. We used a linear
  combination of the first three singular vectors of the matrix,
  weighted by the singular values. }. 

\item {\textbf{Truncated HOSVD.}} This was included as a benchmark
  comparison. 

\item {\textbf{minK-back.}} In this method we used the minimal Krylov method but performed 
  10 extra steps. Then we formed the core $\cH$ and computed a truncated HOSVD
  approximation of $\cH$. As a last step we truncated the Krylov subspaces
  accordingly. 
\end{description}

In all Krylov-based methods we used four initial minimal Krylov steps
before we started using the optimizations. 

\begin{figure}[tbp!]
\centering
\includegraphics[width=.8\textwidth]{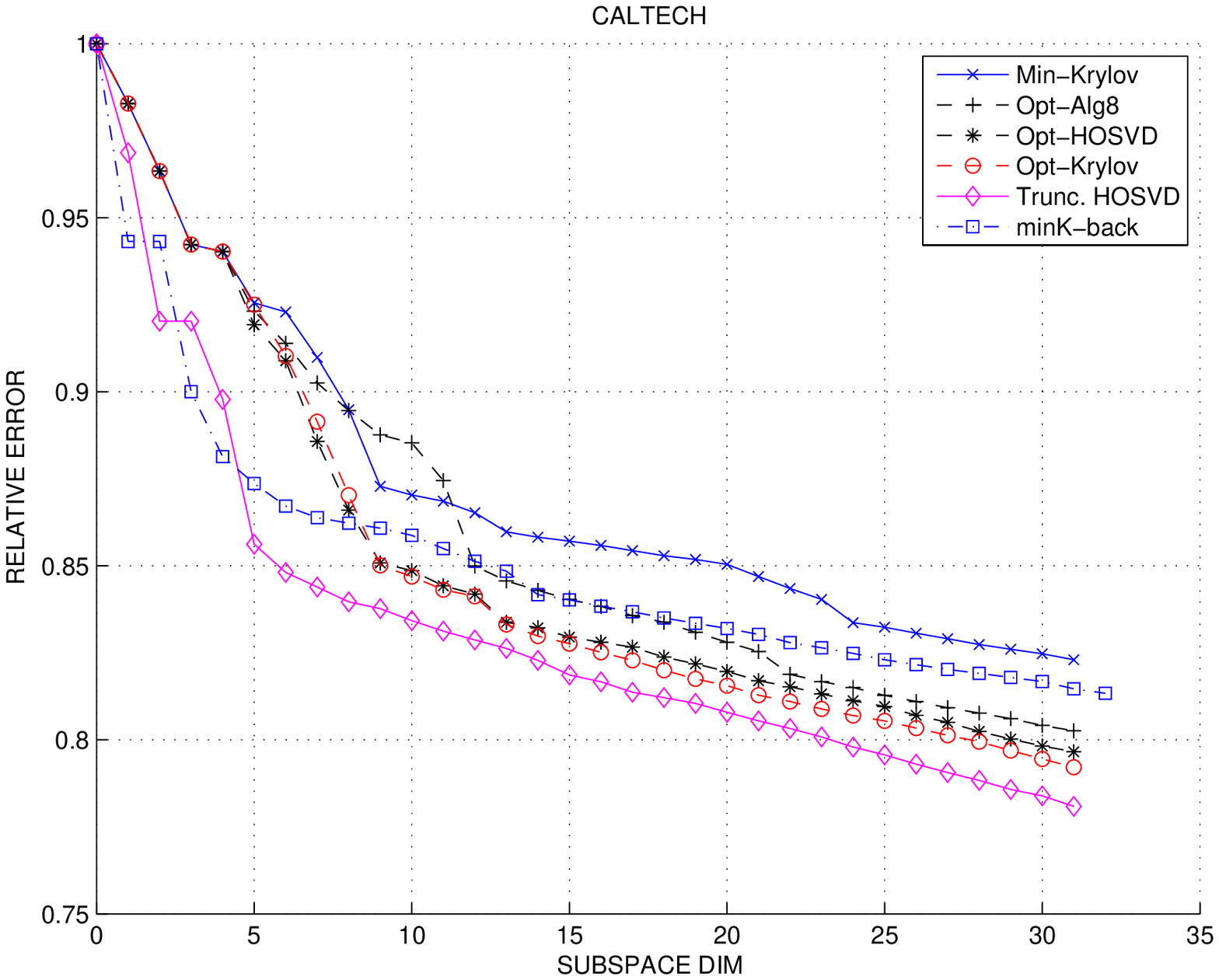}\\
\includegraphics[width=.8\textwidth]{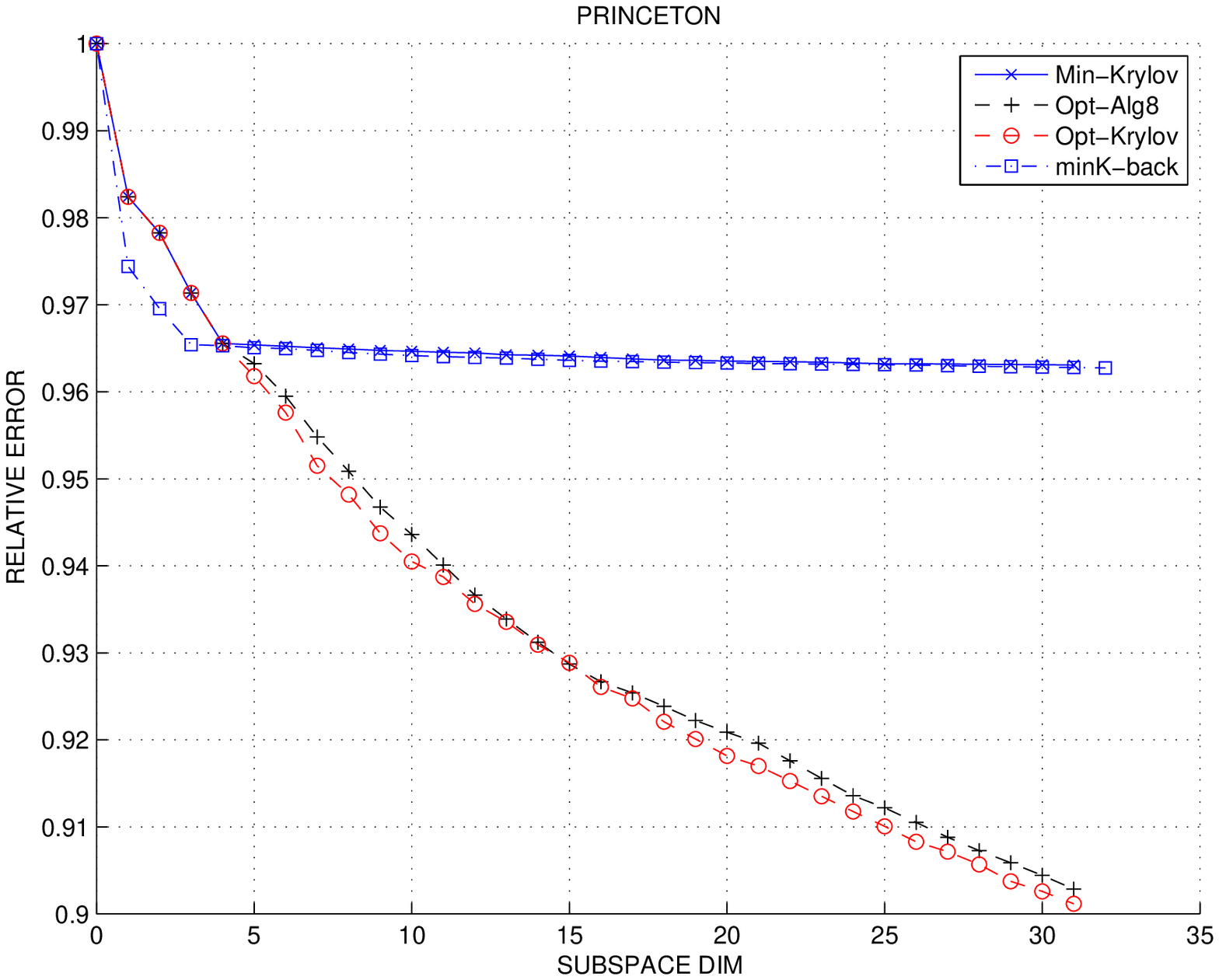}
\caption{Errors in the low rank approximations of the sparse Caltech (top)
  and Princeton (bottom) tensors.}
\label{fig:optimcaltech}
\end{figure}

Another sparse tensor was created using the Facebook data from
Princeton. Here the tensor was constructed using a student/faculty
flag as third mode, giving a $6593 \x 6593 \x 29$ tensor with 585,754
non-zeros. 

The results are illustrated in Figure \ref{fig:optimcaltech}. We see
that for the Caltech tensor  the ``backward-looking'' variant
(minK-back) gives good approximations for small dimensions as long as
there is a significant improvement in each step of the minimal Krylov
recursion.  After some ten steps all optimized variants give approximations
that are rather close to that of the HOSVD. 

For the Princeton  tensor we only ran the minimal Krylov recursion and
two of the optimizations. Here the optimized versions continued to give
significant improvements as the dimension is increased, in spite of
the poor performance of the minimal Krylov procedure itself. 

\subsection{Test on Handwritten Digits}
\label{sec:testDigits}
Tensor methods for the classification of handwritten digits are
described in \cite{savas03,savas07}. 
We have performed tests using the handwritten digits from the US
postal service database. Digits from the database were formed into a
tensor $\cD$ of dimensions $400 \x 1194 \x 10$. The first mode of the
tensor represents pixels\footnote{Each digit is smoothed and reshaped
  to a vector.}, the second mode represents the variation within the
different classes and the third mode represents the different
classes. Our goal was to find low dimensional subspaces $U_p$ and $V_q$ in
the first and second mode, respectively. The approximation of the original tensor can be written as 
\begin{equation}
\label{eq:digTenApp}
\RR^{400 \x 1194 \x 10}\ni \cD \approx \tml[1,2]{U_p,V_q}{\cF} \equiv \cD_{p,q,10}.
\end{equation}
An important difference compared to  the previous sections
is that here we wanted to find only two of three matrices. The class mode of
the tensor was not reduced to lower rank, i.e. we were computing a
rank-$(p,q,10)$ approximation of $\cD$. We computed  low rank
approximations for this tensor using five different methods: (1)
truncated HOSVD; (2) modified minimal Krylov recursion; (3) contracted
tensor product Krylov recursion; (4) maximal Krylov recursion; and (5)
optimized minimal Krylov recursion. Figure \ref{fig:digits} shows the
obtained results for low rank approximations with $(p,q) = \{
(5,10),\; (10,20),\; (15,30),\; \cdots \;, \;(50,100)\}$. The reduction of dimensionality in two modes required
special treatment for several of the methods. We will describe each
case separately.  
In each case the low rank approximation $\cD_{p,q,10}$ is given by 
\begin{equation}
\label{eq:digApp}
\cD_{p,q,10} = \tml[1,2]{U_{p}U_{p}\tp,V_{q} V_{q}\tp}{\cD}
\end{equation}
where the matrices $U_{p}$ and $V_{q}$ were obtained using different methods.
\paragraph{Truncated HOSVD} We computed the HOSVD $\cD = \tml{U,V,W}{\cC}$ and truncated the multilinear singular matrices, i.e. $U_{p} = U(:,1:p)$ and $V_{q} = V(:,1:q)$.
\paragraph{Modified minimal Krylov recursion} The minimal Krylov recursion was modified in several respects. We ran Algorithm \ref{alg:minKrylov} for 10 iterations and obtained $U_{10},V_{10},W_{10}$. Next we ran $p-10$ iterations and generated only $u$ and $v$ vectors. For every new $u_{k+1}$ we used $\bar{v}$ and $\bar{w}$ as random liner combination of vectors in $V_{k}$ and $W_{10}$, respectively. In the last $q-p$ iterations we only generated new $v$ vectors using again random linear combinations of vectors in $U_{p}$ and $W_{10}$.
\paragraph{Contracted tensor product Krylov recursion} For this method
we applied $p$ and $q$ steps of the Lanczos method with random starting vectors on the
matrices $\ctp[-1]{\cA,\cA}$ and $\ctp[-2]{\cA,\cA}$, respectively.
\paragraph{Maximal Krylov recursion} We used the maximal Krylov recursion with starting vectors $u_{1}$ and $v_{1}$ to generate $W_{1} \rightarrow U_{2} \rightarrow V_{3} \rightarrow W_{6} \rightarrow U_{19} \rightarrow V_{115}$. Clearly, we now need to make modification to the general procedure. $W_{10}$ was obtained as the third mode singular matrix from the HOSVD of the product $\tmr[1,2]{\cD}{U_{19},V_{115}}$ and as a last step we computed $U_{100}$ as the 100 dimensional dominant subspace obtained from the first mode singular matrix of $\tmr[2,3]{\cD}{V_{114},W_{10}}$. The $U_{p}$ and $V_{q}$ used for the approximation in \eqref{eq:digApp} were constructed as follows: We formed the product $\bar{\cC} = \tmr[1,2]{\cD}{U_{100},V_{115}}$ and computed the HOSVD $\tml{\tilde{U},\tilde{V},\tilde{W}}{\tilde{\cC}} = \bar{\cC}$. Then $U_{p} = U_{100}\tilde{U}(:,1:p)$ and $V_{q} = U_{115}\tilde{V}(:,1:q)$.
\paragraph{Optimized minimal Krylov recursion} For this 
minimal Krylov recursion we used the optimization approach, for every new vector, from the first part of  Section \ref{sec:kryl-opt}. The coefficients for the linear combinations were obtained by best rank-$(1,1,1)$ approximation of factors as $\cC_{w}$ in Equation \eqref{eq:Cw}.
\begin{figure}[tbp!]
\centering
\includegraphics[width=0.8\textwidth]{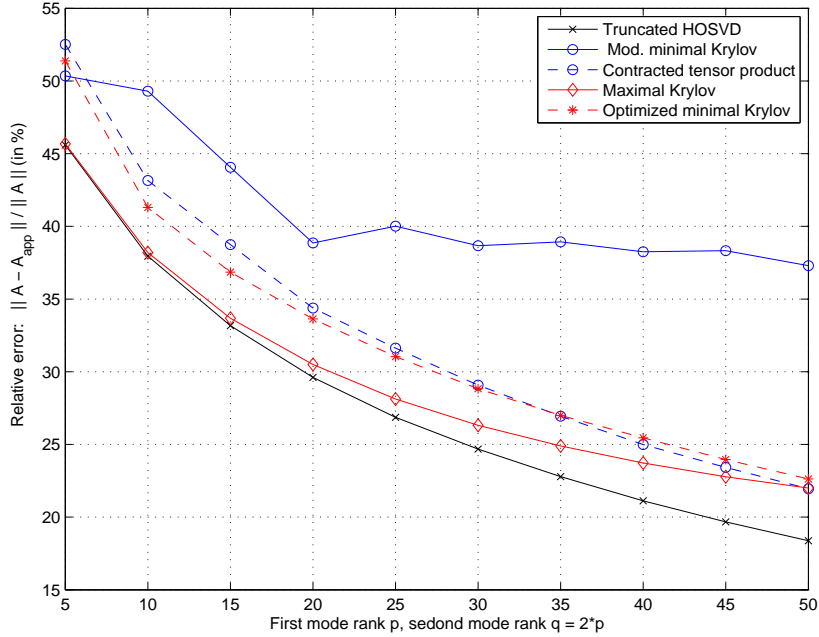}
\caption{The relative error of the low rank approximations obtained using five different methods. The $x$-axis indicates the ranks $(p,q,10) = (p, 2p, 10)$ in the approximation. }
\label{fig:digits}
\end{figure}

The experiments with the handwritten digits are illustrated in Figure
\ref{fig:digits}. 
We made the following observations: (1) The truncated HOSVD
give the best approximation for all cases; (2) The minimal Krylov
approach did not perform well. We  observed several 
breakdowns for this methods as the ranks in the approximation
increased. Every time the process broke down we used a randomly generated vector
that was orthonormalized against all previously generated vectors. (3)
The Lanczos method on the contracted tensor products performed very
similar as the optimized minimal Krylov method. (4)  
The performance of the maximal Krylov method was initially as good as the truncated HOSVD but its performance degraded eventually. 

Classifications results using these subspaces in the algorithmic setting presented in \cite{savas07} are similar for all methods, indicating that all of the methods capture subspaces that can used for classification purposes. 

\subsection{Tests on the Netflix Data}
\label{sec:testNetflix}

A few years ago, the Netflix company announced a
competition\footnote{The competition has obtained huge attention from
  many researcher and non-researchers. The improvement of 10 \% that
  was necessary to claim the prize for the contest was achieved by
  join efforts of a few of the top teams \cite{lohr09}.} 
to improve their  algorithm 
for movie recommendations. Netflix made available movie ratings from
480,189 users/costumers on 17,770 movies  during a time period of 2243
days. In total there were over 100 million ratings. We will not
address the Netflix problem, but we will use the data to test some
of the Krylov methods we are proposing. For our experiments we formed
the tensor $\cA$ that is $480,189 \x 17,770 \x 2243$ and contains all
the movie ratings. Entries in the tensor for which we do not have any
rating were considered as zeros. We  used the minimal Krylov
recursion and the Lanczos process on the products $\ctp[-1]{\cA,\cA}$,
$\ctp[-2]{\cA,\cA}$ and $\ctp[-3]{\cA,\cA}$ to obtain low rank
approximations of $\cA$. 

In Figure \ref{fig:test2} (left plot) we present the norm of the approximation, i.e. $ \|\cA_{\text{min}} \| = \| \cC_{\text{min}} \|$, where $\cC_{\text{min}} = \tmr{\cA}{U_{k},V_{k},W_{k}}$. We have the same low rank approximation in each mode and the ranks range from $k = 5,10,15,\dots,100$. The plot contains three different runs with random initial vectors in all three modes and a fourth curve that is initiated with the means of the first, second and third mode fibers. Observe that for this size of tensors it is practically impossible to form the approximation $\cA_{\text{min}}  = \tml{U_{k}U_{k}\tp,V_{k} V_{k}\tp, W_{k}W_{k}\tp}{\cA}$ since the approximation will be dense.  But the quantity $\| \cC_{\text{min}} \|$ is computable and indicates the quality of the approximation. Larger $\| \cC_{\text{min}} \|$ means better approximation. In fact for orthonormal matrices $U,V,W$ it holds that $\| \cC_{\text{min}} \|\leq \|\cA \|$. 
\begin{figure}[tbp!]
\centering
\includegraphics[width=.48\textwidth]{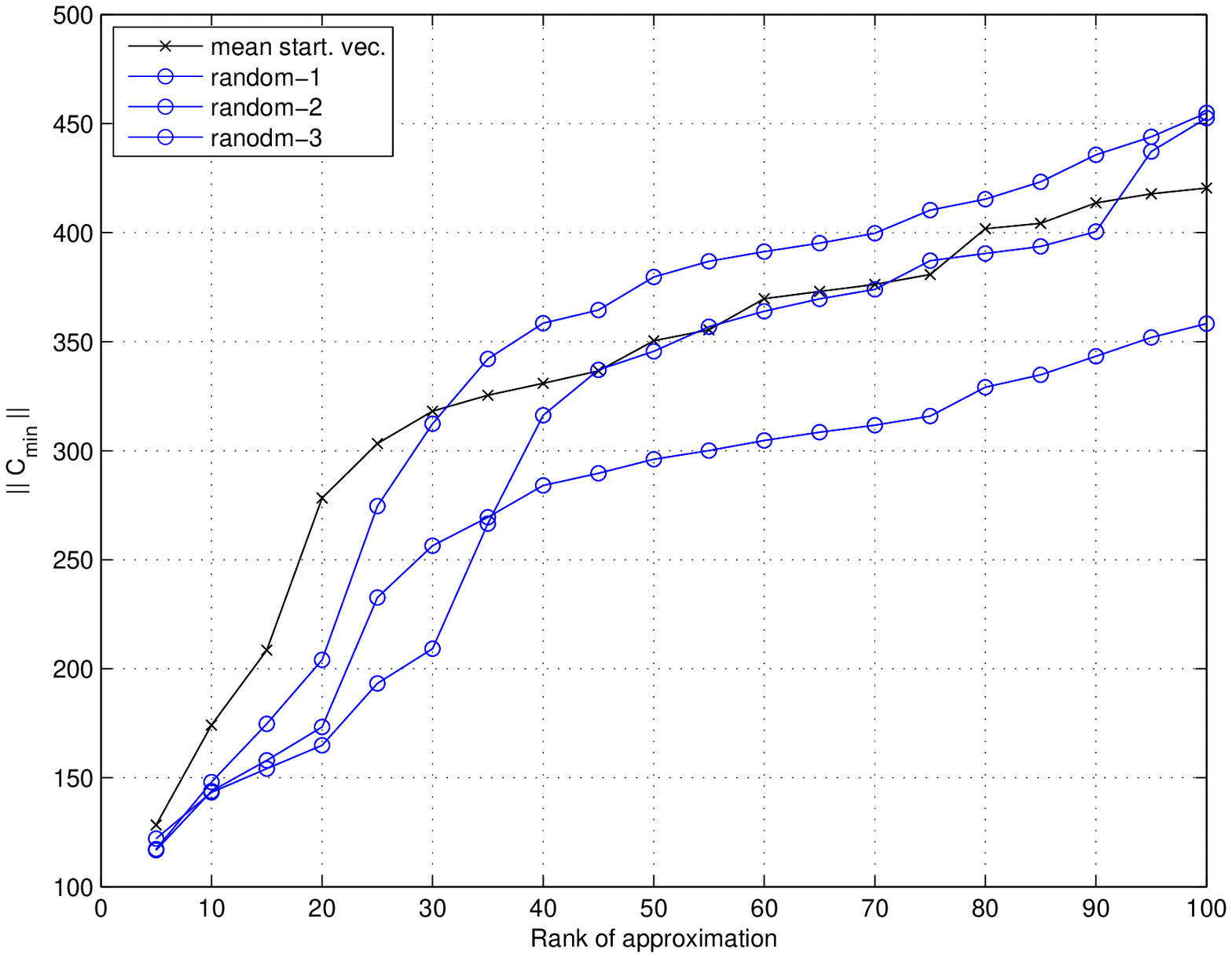}
\includegraphics[width=.48\textwidth]{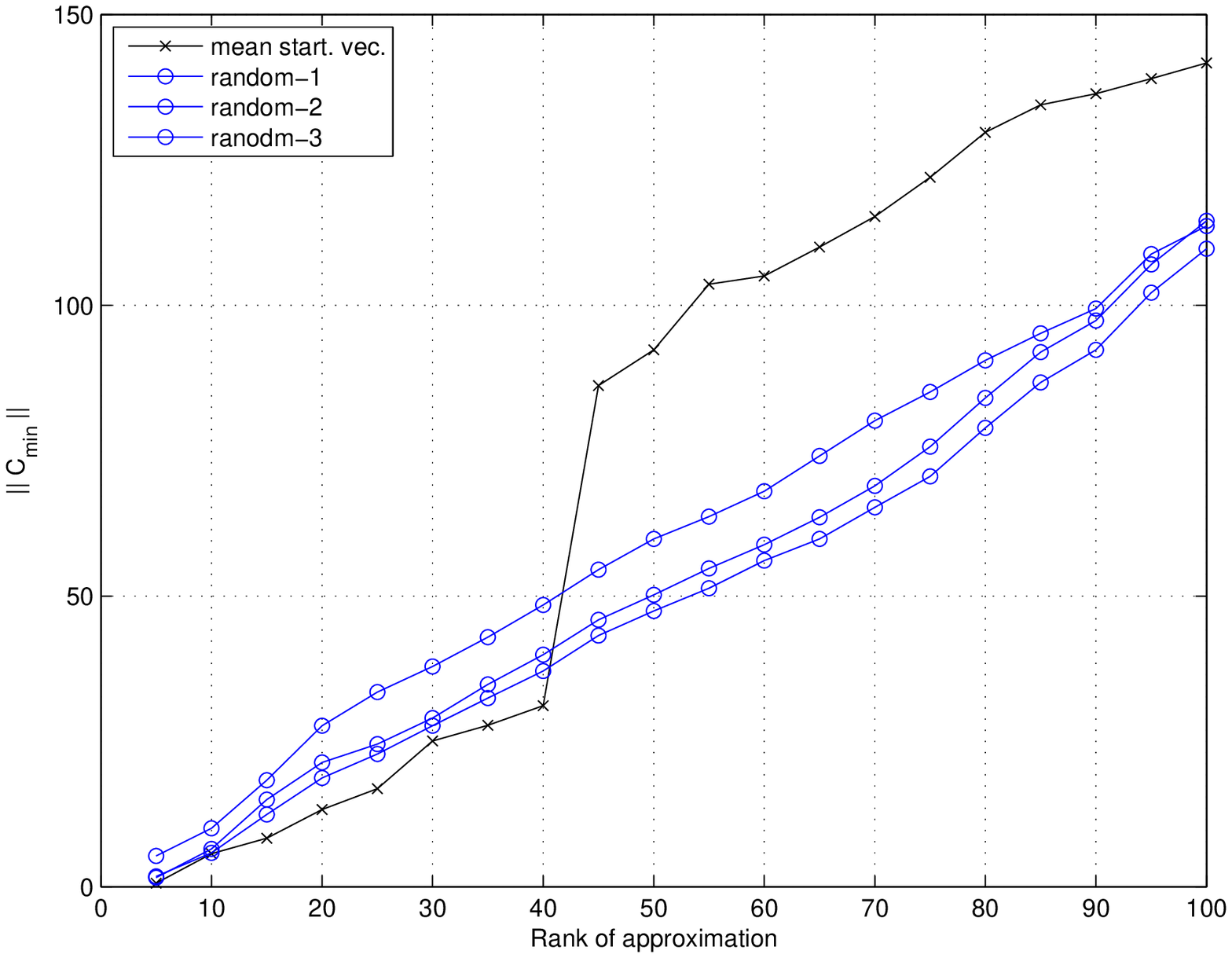}
\caption{We plot $\| \cC_{\text{min}} \|$ as a function of the
    rank $(p,q,r) = (p,p,p)$ in the approximation with $p =
    5,10,15,\dots,100$. \textbf{Left plot:} $U_{p},V_{p},W_{p}$ are
    obtained using the minimal Krylov recursion. Four runs are
    presented: one using starting vectors $u_{1},v_{1},w_{1}$ as the
    means of the mode one, two and three fibers of $\cA$ and three
    runs with different set of random initial vectors. \textbf{Right
      plot:} The subspaces for his case were obtained from separate
    Lanczos recurrences of contracted tensor products. The starting
    vectors were chosen as in the left plot.}
\label{fig:test2}
\end{figure}

Figure \ref{fig:test2} (right plot) contains similar plots, but now the approximating matrices $U_{k},V_{k},W_{k}$ are obtained using the Lanczos process on the symmetric matrices  $\ctp[-1]{\cA,\cA}$, $\ctp[-2]{\cA,\cA}$ and $\ctp[-3]{\cA,\cA}$. We never formed the first two products, but use the computational formula from Equation \eqref{eq:ctpVec} for obtaining first mode vectors and a similar one for obtaining second mode vectors. We did form $\ctp[-3]{\cA,\cA}$ explicitly since it is a relatively small matrix. 
We ran the Lanczos process with ranks $k = 5,10,15,\dots,100$ using random starting vectors in all three modes. Three tests were made and we used the Lanczos vectors in $U_{k},V_{k},W_{k}$. In addition we computed the top 100 eigenvectors for each one of the contracted products. 
  
We  remark that this Netflix tensor is special in the sense that every
third mode fibre, i.e. $\cA(i,j,:)$,  contains only one nonzero
entry.  It follows that the product
$\ctp[-3]{\cA,\cA}$ is a diagonal matrix. Our emphasis for these tests
was to show that the proposed tensor Krylov methods can be employed on
very large and sparse tensors.  

In  this experiment it turned out to be  more efficient to store the
sparse Netflix tensor slice-wise, where each slice itself was a sparse
matrix, than using the sparse tensor format from the TensorToolbox.

\section{Conclusions and Future Work}
\label{sec:conclusion}

In this paper we propose several ways to generalize matrix Krylov methods
to tensors, having applications with sparse tensor in mind. In particular we
 introduce three different methods for tensor approximations. These
are the \textit{minimal}, \textit{maximal Krylov methods} and the
\textit{contracted tensor product methods}. We prove that, given a tensor
of the form $\cA = \tml{X,Y,Z}{\cC}$ with $\rank (\cA) = \rank(\cC) =
(p,q,r)$, a modified version of the minimal Krylov recursion extracts the
associated subspaces of $\cA$ in $\max\{ p,q,r \}$ iterations. We also
investigate a variant of the the optimized minimal Krylov recursion
\cite{gos10}, which gives better approximation than the minimal recursion,
and which can be implemented using only sparse tensor operations.  We 
also show that the maximal Krylov approach generalizes the matrix Krylov
factorization to a corresponding tensor Krylov factorization.

The experiments clearly indicate that the Krylov
methods are useful for low-rank approximation of large and sparse
tensors. In \cite{gos10} it is also shown that they are efficient for
further compression of dense tensors that are given in canonical format. 
The tensor Krylov methods can also be used to speed up HOSVD
computations.  

As the research on tensor Krylov methods is still in a very early stage,
there are numerous questions that need to be answered, and which will
be the subject of our continued research. We have hinted
to some in the text; here we list a few others.

\begin{enumerate}
\item Details with respect to detecting true break down, in floating point arithmetic, and distinguishing those from the case when a complete subspaces is obtained need to be worked out. 
\item A difficulty with Krylov methods for very large problems is that the
  basis vectors generated are in most cases dense. Also, when the required
  subspace dimension is comparatively large, the cost for
  (re)orthogonalization will be high.  For matrices the subspaces can be
  improved using the implicitly restarted Arnoldi (Krylov-Schur) approach
  \cite{arpack98,stew:02}. Preliminary tests indicate that similar
  procedures for tensors may be  efficient. The properties of such methods
  and their implementation will be studied.

\item The efficiency of the different variants of Krylov methods in terms
  of the number of tensor-vector-vector operations, and taking into account the
  convergence rate will be investigated. 

\end{enumerate}






\bibliographystyle{elsarticle-harv}
\biboptions{sort&compress}
\bibliography{myBibsEN,general}






\end{document}